\documentclass[10pt]{amsart}
\usepackage{amssymb,latexsym,amsmath}

\newcommand{\p}{{\partial}}

\newcommand{\Om}{{\Omega}}
\newcommand{\om}{{\omega}}

\newcommand{\Ll}{{\mathcal L}}

\newcommand{\Mm}{{\mathcal M}}

\newtheorem{theorem}{Theorem}[section]
\newtheorem{thm}[theorem]{Theorem}

\newtheorem{cor}[theorem]{Corollary}
\newtheorem{lemma}[theorem]{Lemma}

\newtheorem{prop}[theorem]{Proposition}

\newtheorem{defn}[theorem]{Definition}

\newtheorem{remark}[theorem]{Remark}

\numberwithin{figure}{section}
\numberwithin{equation}{section}
\numberwithin{table}{section}



\renewcommand{\Tilde}{\widetilde}

\newcommand{\TT}{{\mathbb T}}

\newcommand{\R}{{\mathbb R}}
\newcommand{\C}{{\mathbb C}}

\newcommand{\Z}{{\mathbb Z}}

\date{\today}
\address{Mathematics Department
1326 Stevenson Center
Vanderbilt University
Nashville, TN 37240}
\email{michael.j.chance@vanderbilt.edu}

\subjclass{53D05, 65P10}

\begin{document}
\bibliographystyle{plain}

\title{Degenerate Maxima in Hamiltonian Systems}
\author{Mike Chance}

\maketitle

\begin{abstract}
\noindent
In this paper we explore loops of non-autonomous Hamiltonian diffeomorphisms with degenerate fixed maxima.  We show that such loops can not have totally degenerate fixed global maxima.  This has applications for the Hofer geometry of the group of Hamiltonians for certain symplectic 4 manifolds and also gives criteria for certain 4 manifolds to be uniruled.
\end{abstract}

\begin{section}{Introduction and Main Results}
In this paper we examine Hamiltonian flows whose associated Hamiltonians have fixed maxima.  By this we mean points where a time dependent Hamiltonian attains a maximum for all time.  In  \cite{mc} McDuff proves several results for loops of Hamiltonian diffeomorphisms for which the fixed global maxima are nondegenerate.  The aim of this paper is to extend these results to the degenerate case. 

Such Hamiltonian flows may be viewed as generalizations of the autonomous case, while loops of this form are natural generalizations of Hamiltonian $S^1$ actions.  The existence of such actions gives useful information about the geometry of the underlying manifold, see e.g. \cite{eg, mt}.  Furthermore, both types of flows may be exploited to study the Hofer geometry of the Hamiltonian group.

Throughout the paper $(M, \omega)$ will be a closed, connected symplectic manifold.  $H_t : M \rightarrow \R$ will be a smooth family of Hamiltonians parameterized by $t \in [0,1]$ and $X_t^H$ will denote the associated Hamiltonian vector field defined by $\omega(X_t^H, \cdot) = dH_t$ and $\phi_t^H$ will the corresponding flow.

\begin{defn}\label{degdef}
A point, $x_0 \in M$, satisfying $\phi^H_t(x_0) = x_0, \forall t$ is called nondegenerate at $t_0$ if $\frac{d}{dt}|_{t=t_0}D\phi_t^H(x_0)v \ne 0$ for all $0 \ne v \in T_{x_0}M$, nondegenerate if it is nondegenerate for all time.  It is called totally degenerate if $D\phi_t^H(x_0) = Id$ for all values of $t$.
\end{defn}

\begin{defn}\label{fixedmaximum}
A point $x_0 \in M$ is called a fixed local maximum on U of $H_t$ if there exists a neighborhood $U \subset M$ of $x_0$ such that $H_t(x) \ge H_t(y)$ for all values of $t$ and $\forall y \in U$.  Similarly, $x_0 \in M$ is called a fixed global maximum if $H_t(x_0) \ge H_t(y)$ for all values of $t$ and $\forall y \in M$.  We will denote the collection of fixed local maxima $D_{max}$ and fixed global maxima $F_{max}$.
\end{defn} 

Our first result is the following theorem.

\begin{thm}\label{totallydegnonmax}
Given a loop of Hamiltonian diffeomorphisms, the collection of totally degenerate fixed local maximum points $D_{max} \subset M$ is open.
\end{thm}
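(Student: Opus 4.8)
The plan is to reduce the openness statement to a purely local flatness assertion near a totally degenerate point $x_0\in D_{max}$, and then prove that assertion by a jet computation followed by a symplectic volume estimate. Fix such an $x_0$ and a neighbourhood $U$ with $H_t(x_0)\ge H_t(y)$ for all $y\in U$ and all $t$, and set $G_t:=H_t-H_t(x_0)$. The first observation is an elementary reformulation: the set of totally degenerate points of $D_{max}$ contains a neighbourhood of $x_0$ if and only if there is an open $W\ni x_0$ on which $X_t^H\equiv 0$, equivalently on which $H_t$ is spatially locally constant, for every $t$. Indeed, if $H_t\equiv H_t(x_0)$ on an open $W\subseteq U$, then $\phi_t^H$ restricts to the identity on $W$, so every point of $W$ is fixed with $D\phi_t^H\equiv\id$ there and attains $\max_U H_t$, hence lies in $D_{max}$ and is totally degenerate; conversely, if such points fill a neighbourhood $W$ of $x_0$, then each of them is fixed, so $\phi_t^H|_W=\id$ and $X_t^H$ vanishes on $W$. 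So it suffices to prove this flatness near every totally degenerate $x_0\in D_{max}$.

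Second, I would show that $G_t$ vanishes to infinite order at $x_0$. Differentiating $\frac{d}{dt}D\phi_t^H(x_0)=DX_t^H(x_0)\,D\phi_t^H(x_0)$ and using $D\phi_t^H(x_0)\equiv\id$ gives $DX_t^H(x_0)=0$, i.e. the Hessian of $H_t$ at $x_0$ vanishes for all $t$; together with $dH_t(x_0)=0$ this makes $G_t$ vanish to order $\ge 3$. Now one inducts on the jets of $\phi_t^H$ at $x_0$: once the $k$-jet of $\phi_t^H$ at $x_0$ is trivial for all $t$, the variational equation for the $(k{+}1)$-jet collapses to its time derivative being the corresponding jet of $X_t^H$, so $\phi_1^H=\id$ forces $\int_0^1(\text{that jet of }X_t^H)\,dt=0$; equivalently the leading homogeneous Taylor term $P_t$ of $G_t$ satisfies $\int_0^1 P_t\,dt=0$ as a form. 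On the other hand, $G_t\le 0$ near $x_0$ together with the vanishing of all lower Taylor terms forces, by rescaling, $P_t\le 0$ as a form for each $t$. But a homogeneous form of odd degree that is $\le 0$ is identically $0$, and a homogeneous form of even degree that is $\le 0$ with zero time-average is $0$ for a.e.\ $t$, hence for all $t$ by continuity. Either way $P_t\equiv 0$, the corresponding jet of $X_t^H$ vanishes for all $t$, and the induction continues, giving infinite-order flatness of $G_t$ at $x_0$.

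The remaining step, upgrading infinite-order flatness at the single point $x_0$ to spatial constancy of $H_t$ on a neighbourhood, is the real obstacle, and I would attack it by contradiction with a coarea/volume estimate. If $H_t$ is not locally constant near $x_0$, then $\{G_t\ge c\}$ shrinks to a genuinely thin set near $x_0$ as $c\uparrow 0$; let $B(c)$ be the average over $t\in[0,1]$ of the symplectic volume $\int \om^n$ of $\{G_t\ge c\}$ inside a fixed small ball. Because $\phi_1^H=\id$, every orbit closes up by time $1$, so in the coarea expression for $B'(c)$ the relevant periods are reciprocals of positive integers and locally constant in $c$, which forces $B(c)$ to be essentially affine in $c$ near $0$ with bounded slope; but infinite-order flatness forces $\{G_t\ge c\}$ to contain balls of radius exceeding any power $|c|^{1/N}$, so $B(c)$ cannot decay to $0$ like a linear function, and if instead $B(c)$ tends to a positive limit, then — since $\{G_t=0\}$ near $x_0$ consists of fixed maxima, hence fixed points — this positivity must propagate to make $x_0$ interior to $\{G_t=0\}$. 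The genuinely delicate points I expect to be: handling level sets that contain critical points or break into many components, packaging the time-dependence while keeping the $\phi_1^H=\id$ constraint usable, and the final passage from full density of $\{G_t=0\}$ at $x_0$ to $x_0$ being an interior point. Once this flatness is established, combining it with the reformulation of the first paragraph shows that $D_{max}$ is open.
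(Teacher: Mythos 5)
Your first two steps are sound. The reformulation of openness as local spatial constancy of $H_t$ near a totally degenerate $x_0 \in D_{max}$ is correct, and the jet induction is a genuinely nice elementary argument: once the $k$-jet of $\phi_t^H$ at $x_0$ is trivial for all $t$, the leading homogeneous term $P_t$ of $G_t=H_t-H_t(x_0)$ satisfies $\int_0^1 P_t\,dt=0$ (this is exactly where the loop condition enters), while the maximum condition forces $P_t\le 0$ by rescaling, so $P_t\equiv 0$ and the induction climbs. This correctly establishes that $H_t$ vanishes to infinite order at $x_0$ for every $t$ --- which is in fact more than the paper extracts, since the proof of Lemma \ref{vanishing} only needs (and only derives) vanishing to finite order.

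The third step, however, is where the entire content of the theorem lives, and what you propose there is not a proof; two of its load-bearing claims fail. First, the assertion that ``the relevant periods are reciprocals of positive integers and locally constant in $c$'' is an artifact of the autonomous case: for a time-dependent Hamiltonian the orbits $t\mapsto\phi_t^H(x)$ do not lie on level sets of $G_t$, there is no period function attached to a level, and the coarea expression for $B'(c)$ (an integral of $1/|\nabla G_t|$ over the moving hypersurfaces $\{G_t=c\}$) bears no visible relation to the constraint $\phi_1^H=\id$; nothing forces $B$ to be affine or to have bounded slope. Second, even if you could show that the common zero set $\bigcap_t\{G_t=0\}$ has full Lebesgue density at $x_0$, that would not make $x_0$ an interior point --- a set can have density one at a point without containing a neighborhood of it --- and the fixed-point set of a Hamiltonian loop carries no a priori regularity ruling this out. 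These are precisely the two points you flag as ``delicate,'' but they are not technical refinements of a working argument; they are the theorem. The paper circumvents them by going through the Hamiltonian fibration $P\to S^2$ built from the loop: total degeneracy makes the constant section through $x_0$ regular for a suitably constructed compatible almost complex structure (Proposition \ref{regular}), the implicit function theorem then produces a $2n$-dimensional family of nearby sections in the class $[s_0]$ sweeping out a neighborhood of $x_0$ in the fiber, and an area/energy identity (Lemma \ref{smallest}) forces all of these to be constant sections sitting at fixed local maxima. Some input of this Fredholm/holomorphic-curve type seems unavoidable here: pointwise infinite-order flatness says nothing about $H_t$ on a punctured neighborhood of $x_0$, and no purely measure-theoretic mechanism in your sketch bridges that gap.
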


In the event that our maximum is global, we prove the following consequence.

\begin{cor}\label{totallydegenerateintro}
Let $\{\phi_t^H\}$, $t \in [0, 1]$, be a nonconstant loop in $Ham(M, \omega)$ based at $Id$.  If $x_0$ is a fixed global maximum, then we must have $D\phi_t^H(x_0) \ne Id$ for some value of $t$.
\end{cor}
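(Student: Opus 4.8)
The plan is to argue by contradiction, playing the openness of $D_{max}$ from Theorem~\ref{totallydegnonmax} against the connectedness of $M$. Suppose $\{\phi_t^H\}$ is a nonconstant loop based at $Id$, that $x_0$ is a fixed global maximum, and --- contrary to the assertion --- that $D\phi_t^H(x_0)=Id$ for every $t$. Then $x_0$ is a fixed local maximum which is totally degenerate, so $x_0\in D_{max}$. Let $V$ be the connected component of $D_{max}$ containing $x_0$. By Theorem~\ref{totallydegnonmax}, $D_{max}$ is open, so $V$ is nonempty and open in $M$. I would then show that $V$ is also closed; connectedness of $M$ gives $V=M$, and since every point of $D_{max}$ is fixed by the entire loop, $V=M$ forces $\phi^H_t=Id$ for all $t$, contradicting nonconstancy.

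Two facts feed the closedness step. (i) $H_t$ is constant on $V$, equal to $\max_M H_t$: for $y\in V$ we have $\phi_t^H(y)=y$ for all $t$, hence $X_t^H(y)=X_t^H(\phi_t^H(y))=\tfrac{d}{dt}\phi_t^H(y)=0$, and so $dH_t(y)=0$ by the defining relation $\omega(X_t^H,\cdot)=dH_t$; thus $dH_t$ vanishes on the open connected set $V$, so $H_t|_V$ is constant, with value $H_t(x_0)=\max_M H_t$. (ii) The set $\Sigma=\{x\in M:\phi_t^H(x)=x \text{ and } D\phi_t^H(x)=Id \text{ for all } t\}$ of totally degenerate fixed points is closed, being an intersection over $t\in[0,1]$ of sets closed in $x$ by continuity of $x\mapsto\phi_t^H(x)$ and $x\mapsto D\phi_t^H(x)$.

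Then, for any $z\in\overline{V}$: since $V\subset\Sigma$ and $\Sigma$ is closed, $z$ is a totally degenerate fixed point; and by (i) together with continuity, $H_t(z)=\max_M H_t$ for every $t$, so $z$ is a fixed global maximum, in particular a fixed local maximum. Hence $z\in D_{max}$. Because $D_{max}$ is open its connected components are open, and the component through $z$ must meet $V$ (as $z\in\overline V$), so it equals $V$; therefore $z\in V$. This would give $\overline{V}=V$, completing the argument.

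The delicate point is exactly this closedness: a limit of fixed \emph{local} maxima need not be a local maximum, so a priori $\overline V$ could escape $D_{max}$. What saves it is the global maximum hypothesis --- carried along $V$ via $dH_t|_V=0$, which holds because every point of $V$ is fixed, hence critical for $H_t$ --- pinning $H_t$ at its global maximum all along $\overline V$ and so forcing boundary points of $V$ to be fixed global, hence local, maxima. The remaining ingredients are soft: connectedness of $M$ for the contradiction, and Theorem~\ref{totallydegnonmax} supplying the openness that makes the clopen argument run.
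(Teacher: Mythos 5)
Your proof is correct, and it is the same open--closed--connected argument in spirit, but you run it on a different set than the paper does, which changes where the work lands. The paper's proof is two lines: it reruns the Theorem~\ref{totallydegnonmax} argument with Lemma~\ref{smallest2} in place of Lemma~\ref{smallest} to conclude that a totally degenerate fixed \emph{global} maximum is an interior point of $F_{max}$, and then uses that $F_{max}$ is automatically closed (it is cut out by the closed conditions $H_t(x)=\max_M H_t$), so closedness costs nothing and connectedness finishes it. You instead take Theorem~\ref{totallydegnonmax} as a black box --- openness of the totally degenerate part of $D_{max}$ --- and therefore must pay for closedness of the component $V$ yourself, since a limit of local maxima need not be a local maximum; you do this correctly by observing that every point of $V$ is fixed, hence critical, so $dH_t|_V=0$ and $H_t$ is pinned at $\max_M H_t$ on $V$, which forces points of $\overline V$ to be fixed global maxima, and combining this with the closedness of the totally-degenerate-fixed-point condition. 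Your route has the advantage of not needing Lemma~\ref{smallest2} separately, and it is actually more careful than the paper's write-up, which elides the step showing that \emph{every} point of the relevant clopen piece (not just $x_0$) is interior to $F_{max}$; your explicit component-and-closure argument fills exactly that gap.
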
 

Of course a similar statement holds for minima by simply considering the function $-H_t$.  Our proofs use methods of holomorphic curves and the requirement that the maximum is global in Corollary \ref{totallydegenerateintro} cannot be dropped.  This result allows us to then construct loops of Hamiltonian diffeomorphisms  with fixed nondegenerate global maxima.  Combining these constructions with results of Slimowitz \cite{sl} we obtain:

\begin{thm}\label{fourdimdeg}
Let $M$ be a symplectic manifold with $\dim M \le 4$.  If $\{\phi_t^H\}$ is any nonconstant loop of Hamiltonian diffeomorphisms with $x_0 \in M$ a fixed global maximum, then there is a nonconstant loop $\phi_t^K$ with $x_0$ still a fixed global maximum, which is an effective $S^1$ action near $x_0$.
\end{thm}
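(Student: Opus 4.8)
The plan is to use Corollary \ref{totallydegenerateintro} to record that the maximum at $x_0$ is not totally degenerate, and then --- rather than trying to modify $\{\phi^H_t\}$ itself near $x_0$, which is hard to do while keeping it both a loop and a global maximum --- to produce a fresh Hamiltonian circle action whose moment map attains its maximum at $x_0$, and take $\phi^K$ to be that action. Since $\phi^H_t$ is the flow of $X^H_t$ it is based at the identity, so $A_t:=d(\phi^H_t)_{x_0}$ is a loop in $\Sp(T_{x_0}M,\om_{x_0})\cong\Sp(2n)$ with $A_0=A_1=\id$ and $2n=\dim M\le 4$; in a Darboux chart centered at $x_0$ the Hessians $Q_t:=D^2H_t(x_0)$ are the quadratic Hamiltonians generating $A_t$. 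Since $x_0\in F_{max}$ each $Q_t$ is negative semidefinite, and by Corollary \ref{totallydegenerateintro} applied to the nonconstant loop $\{\phi^H_t\}$ we have $A_t\ne\id$ for some $t$, i.e. $Q_t\not\equiv 0$: the maximum is not totally degenerate. This is precisely the hypothesis under which the holomorphic-curve results of \cite{mc} can be brought to bear.

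Feeding $\{\phi^H_t\}$ and its non-totally-degenerate fixed maximum into that machinery, the section of the associated Hamiltonian fiber bundle $M\hookrightarrow P\to S^2$ determined by $x_0$ carries enough positivity to constrain $M$ sharply. In dimension $\le 4$ this is the range where Slimowitz's results \cite{sl} apply, and the upshot of combining them with \cite{mc} is that $M$ admits a nonconstant Hamiltonian $S^1$-action whose moment map attains its maximum along a fixed submanifold $F$ --- and, in the rational case, at an isolated fixed point, the nondegenerate example alluded to before the statement of the theorem. One then conjugates this action by a Hamiltonian diffeomorphism --- possible because $Ham(M,\om)$ acts transitively on $M$ and hence can carry $F$ through $x_0$ --- so that $x_0\in F$, and sets $\phi^K$ equal to the conjugated action. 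Near $x_0$ it is linear in equivariant Darboux coordinates with nonzero normal weights, and since a nontrivial finite subgroup of $S^1$ acting trivially near $x_0$ would act trivially on all of $M$, the action is effective on a neighborhood of $x_0$. Thus $\phi^K$ is a nonconstant Hamiltonian loop with $x_0$ a fixed global maximum that is an effective $S^1$-action near $x_0$.

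The main obstacle is the middle step: turning the non-total-degeneracy of the maximum into the existence of the circle action, i.e. identifying $M$. One has only the weak information from Corollary \ref{totallydegenerateintro} --- the maximum moves, but possibly degenerately --- rather than genuine nondegeneracy, so the compactness and gluing analysis for the moduli of holomorphic spheres through the section $x_0$ must be carried through in this degenerate setting; this is where $\dim M\le 4$ and the analysis of \cite{sl} do the real work. A secondary subtlety is the ruled case over a base of positive genus, where $F$ is positive-dimensional: one must check that placing $x_0$ on $F$, rather than at an isolated fixed point, still satisfies Definition \ref{fixedmaximum} and still yields an effective action near $x_0$ --- which it does, since $F$ is exactly the locus where the moment map is maximal and the action is linear and effective transverse to $F$. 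Granted the circle action, verifying that $\phi^K$ has all the asserted properties is routine.
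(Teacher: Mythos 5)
Your proposal misreads the conclusion of the theorem and, in trying to prove something stronger, replaces the paper's actual argument with an unsupported claim. The theorem does not assert that $M$ carries a global Hamiltonian $S^1$-action; it asserts that the given loop can be replaced by a (still nonconstant, still globally maximized at $x_0$) loop of Hamiltonian diffeomorphisms whose restriction to a neighborhood of $x_0$ coincides with an effective linear circle action --- exactly the conclusion of the McDuff lemma quoted in Section \ref{pos_semipos2}, which requires a \emph{nondegenerate} fixed maximum and a linearized flow that is a positive loop in $\Sp(2n)$. Your middle step --- ``the upshot of combining [Slimowitz] with \cite{mc} is that $M$ admits a nonconstant Hamiltonian $S^1$-action'' --- is not a result of either reference: Slimowitz's theorems are statements about positive loops of matrices in $\Sp(2)$ and $\Sp(4)$, and McDuff's holomorphic-curve machinery yields uniruledness (a nonvanishing Gromov--Witten invariant), which is Theorem \ref{nondeguniintro}, a \emph{consequence} of the present theorem, not an input to it. Uniruledness in dimension $4$ does not give you a Hamiltonian circle action, so the object you propose to conjugate into position does not exist on the strength of anything you have established.

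The idea you are missing is the content of Lemma \ref{firstspread} and Propositions \ref{secondspread} and \ref{thirdspread}: starting from $D\phi^H_{t_0}(x_0)\ne\id$ (which is all that Corollary \ref{totallydegenerateintro} gives), one first passes to a homotopic iterate $\{(\phi^H_t)^m\}$, corrected by conjugation with compactly supported symplectomorphisms near $x_0$, to make the Hessian $Q_{t_0}$ strictly negative definite at one time $t_0$ (Proposition \ref{thirdspread}); one then ``spreads out the positivity'' by composing with small loops generated by radial bump Hamiltonians supported near $x_0$, homotoping through semipositive paths to a loop whose quadratic part is definite for \emph{all} $t$ (Lemma \ref{firstspread}, Proposition \ref{secondspread}) --- and all of this is arranged so that $x_0$ remains a fixed global maximum throughout. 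Only at that point do Slimowitz's Theorem \ref{slimowitz1} (homotoping the now-positive linearized loop in $\Sp(2)$ or $\Sp(4)$ to a circle action, which is where $\dim M\le 4$ enters) and McDuff's lemma apply, producing a loop equal to that circle action near $x_0$. Your proposal skips this entire upgrade from ``not totally degenerate'' to ``nondegenerate,'' which is the real work of Section \ref{pos_semipos2}.
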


The dimensional restriction here is due to the fact that the homotopy results of Slimowitz have only been proved for $\dim \le 4$, although in principle those results should hold in all dimensions.

A symplectic manifold is called uniruled if some point class nonzero Gromov-Witten invariant does not vanish.  More specifically this means there exist $a_2, \dots, a_k \in H_*(M)$ so that
\begin{eqnarray*}
\langle pt, a_2, \dots, a_k\rangle^M_{k, \beta} \ne 0 \mbox{ for some } 0 \ne \beta \in H^S_2(M),
\end{eqnarray*}

\noindent
where $pt$ is the point class in $H_0(M)$.  We refer the reader to \cite{ms1} for details on the Gromov-Witten invariants.  In \cite{mc} McDuff uses the Seidel element and methods of relative Gromov-Witten invariants to show that manifolds admitting a loop of Hamiltonian diffeomorphisms with a fixed nondegenerate global maximum must be uniruled.  Thus, combining the results of McDuff with Theorem \ref{fourdimdeg} we have:

\begin{thm}\label{nondeguniintro}
If $\dim M \le 4$ and there exists a nonconstant loop of Hamiltonian diffeomorphisms with a fixed global maximum,  then $(M, \omega)$ is uniruled.
\end{thm}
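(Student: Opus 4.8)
The plan is to obtain Theorem~\ref{nondeguniintro} as a formal consequence of Theorem~\ref{fourdimdeg} together with the uniruledness criterion of McDuff~\cite{mc}, which states that a closed symplectic manifold carrying a nonconstant loop of Hamiltonian diffeomorphisms with a fixed \emph{nondegenerate} global maximum has a nonvanishing point class Gromov-Witten invariant, and is therefore uniruled. So the whole task is to manufacture, from the given loop, one whose fixed global maximum at $x_0$ is moreover nondegenerate in the sense of Definition~\ref{degdef}; producing such a loop is exactly what the $S^1$-action conclusion of Theorem~\ref{fourdimdeg} is for.

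Concretely, I would proceed in three steps. First, since $\dim M\le 4$, apply Theorem~\ref{fourdimdeg} to the given nonconstant loop $\{\phi_t^H\}$ with fixed global maximum $x_0$, obtaining a nonconstant loop $\{\phi_t^K\}$ for which $x_0$ is still a fixed global maximum and which, on some neighborhood $U$ of $x_0$, is an effective Hamiltonian $S^1$-action fixing $x_0$. Second, linearize at $x_0$: the isotropy representation of this local circle action on $T_{x_0}M$ is unitary, so $D\phi_t^K(x_0)=\exp(tB)$ for a skew-Hermitian $B$ with eigenvalues $2\pi ik_1,\dots,2\pi ik_n$, $k_j\in\Z$ (using $D\phi_1^K(x_0)=\mathrm{Id}$); since $x_0$ is a maximum of the moment map generating the action on $U$, the $k_j$ have a common sign, and one must still check that none of them is $0$, i.e.\ that $B$ is invertible. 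Granting this, $\tfrac{d}{dt}D\phi_t^K(x_0)=B\exp(tB)$ is invertible for every $t$, so by Definition~\ref{degdef} $x_0$ is a nondegenerate fixed point and still a global maximum. Third, apply McDuff's theorem to $\{\phi_t^K\}$ and conclude that $(M,\omega)$ is uniruled.

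The substantive content is all imported: the analysis lives in Theorem~\ref{totallydegnonmax} and Corollary~\ref{totallydegenerateintro}, in Slimowitz's homotopy results~\cite{sl} (the source of the restriction $\dim M\le 4$, which enters through Theorem~\ref{fourdimdeg}), and in McDuff's relative Gromov-Witten and Seidel element argument~\cite{mc}. The one point I would be careful about is the exclusion of a vanishing isotropy weight at $x_0$ in the second step: an effective circle action may perfectly well fix a positive-dimensional submanifold through a maximum of its moment map, so ``effective $S^1$-action near $x_0$'' does not by itself give nondegeneracy, and the \emph{global} maximality of $x_0$ must be used. If Theorem~\ref{fourdimdeg} is stated so as to already deliver a nondegenerate maximum, this is a one-line citation; otherwise one inserts, between the first and second steps, a further modification of $\{\phi_t^K\}$ by a loop supported in a small ball about $x_0$ and generated there by a negative-definite quadratic Hamiltonian on the span of the zero-weight eigenspaces, which keeps $x_0$ a fixed global maximum while turning every remaining weight nonzero (the totally degenerate case, all $k_j=0$, is already excluded by Corollary~\ref{totallydegenerateintro} applied to $\{\phi_t^K\}$, so only partially degenerate directions need the fix). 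Finally, as usual, replacing $H_t$ by $-H_t$ gives the analogous statement for fixed global minima.
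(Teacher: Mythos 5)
Your proposal matches the paper's argument, which is precisely the one-line combination of Theorem~\ref{fourdimdeg} with McDuff's uniruledness criterion from \cite{mc}. Your worry about a vanishing isotropy weight is a fair reading of the bare statement of Theorem~\ref{fourdimdeg}, but it is already resolved by the paper's construction of that loop: Corollary~\ref{totallydegenerateintro} excludes the totally degenerate case, Propositions~\ref{secondspread} and~\ref{thirdspread} (and the ensuing corollary) then make the quadratic part of the Hamiltonian strictly positive definite at $x_0$ for all $t$ while keeping $x_0$ a global maximum, and Slimowitz's homotopy is through \emph{positive} loops, so the resulting circle action near $x_0$ has all weights nonzero and the fixed global maximum is nondegenerate, exactly as McDuff's theorem requires.
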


McDuff relies heavily on the algebraic structure of the quantum homology of $M$ as well as the invertibility of the Seidel element.  While methods used in this paper are largely inspired by these, we rely solely on the geometric structures of a certain Hamiltonian bundle over $S^2$, as opposed to the algebraic information the bundle gives rise to.  

Given a path $\phi_t^H$, $t \in [0,1]$, the Hofer length is defined as
\begin{eqnarray*}
\Ll(\phi^H_t) = \int_0^1 \Big(\max_x H_t(x) - \min_x H_t(x)\Big)dt.
\end{eqnarray*}

This allows one to construct a nondegenerate Finsler metric on the group of Hamiltonian diffeomorphisms, $Ham(M, \omega)$, whose geometry has been studied extensively, see e.g. \cite{bp, kl, lm3, po}.  In particular, in \cite{lm3} Lalonde and McDuff show that if $\phi^H_t$ is a Hofer length minimizing geodesic then its generating Hamiltonian has at least one fixed global minimum and one fixed global maximum.  Thus Theorem \ref{nondeguniintro} implies the following:

\begin{thm}\label{hofintro}
Let $(M, \omega)$ be a closed, connected symplectic $4$-manifold, and suppose that $\gamma \in \pi_1(Ham(M, \omega))$ is nontrivial.  If there exists a representative $\{\phi^H_t\}$ of $\gamma$ which is Hofer length minimizing, then $(M, \omega)$ is uniruled.
\end{thm}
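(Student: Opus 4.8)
The plan is to deduce Theorem~\ref{hofintro} from Theorem~\ref{nondeguniintro} combined with the structural result of Lalonde and McDuff on length-minimizing geodesics; the work lies entirely in assembling pieces that are already in place.

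First I would observe that a representative $\{\phi^H_t\}$ of $\gamma$ which minimizes the Hofer length $\Ll$ among all loops in the homotopy class $\gamma$ is, a fortiori, locally length-minimizing, and since Hofer geodesics are precisely the locally length-minimizing paths, $\{\phi^H_t\}$ is a Hofer length minimizing geodesic in the sense required to apply \cite{lm3}. (If one wishes to match hypotheses literally, one may translate the loop so that it is based at $Id$ without altering either its Hofer length or its homotopy class.)

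Next I would invoke the theorem of Lalonde and McDuff quoted above: the generating Hamiltonian $H_t$ of a Hofer length minimizing geodesic has at least one fixed global maximum $x_0 \in M$ (and at least one fixed global minimum, though only the maximum is needed here). Since $\gamma$ is nontrivial, the loop $\{\phi^H_t\}$ is not null-homotopic, hence in particular nonconstant. Therefore $\{\phi^H_t\}$ is a nonconstant loop of Hamiltonian diffeomorphisms with a fixed global maximum, and because $\dim M = 4 \le 4$, Theorem~\ref{nondeguniintro} applies and shows that $(M,\omega)$ is uniruled.

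There is no substantial obstacle here: the entire analytic content --- reducing, via Theorem~\ref{fourdimdeg}, to an effective $S^1$ action near $x_0$ and then running McDuff's Seidel-element argument --- has already been carried out in establishing Theorem~\ref{nondeguniintro}. The only points demanding a word of care are the identification of a Hofer-length minimizer in the homotopy class with a length-minimizing geodesic, so that \cite{lm3} applies directly, and the observation that nontriviality of $\gamma$ forces the loop to be nonconstant; both are immediate.
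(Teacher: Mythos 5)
Your proposal is correct and follows exactly the route the paper intends: the Lalonde--McDuff result from \cite{lm3} supplies a fixed global maximum for the length-minimizing representative, nontriviality of $\gamma$ gives nonconstancy, and Theorem~\ref{nondeguniintro} then yields uniruledness. This is the same (essentially one-line) deduction the paper makes in the paragraph preceding the theorem statement.
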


Of course this says nothing if the Hamiltonian group is simply connected.  In \cite{mc2} McDuff demonstrates that $\pi_1(Ham(M, \omega)) \ne 0$ if $M$ is a suitable two point blow up of any symplectic 4-manifold.  Thus, if $M$ is not uniruled (e.g. $\TT^4$, a $K3$ surface, or a surface of general type), this two point blow up is a 4-manifold for which there are nontrivial elements of $\pi_1(Ham(M, \omega))$ having no Hofer length minimizing representatives.

  This paper is organized as follows.  Section \ref{pos_semipos2} contains a discussion of positive and semipositive paths.  It also contains the proofs needed for Theorem \ref{fourdimdeg} assuming Corollary \ref{totallydegenerateintro}.  Section \ref{hambundles} contains a discussion of the Hamiltonian fibrations used.  As the machinery needed to prove Theorem \ref{totallydegnonmax} is discussed here, its proof is left to the end of this section.
 
 \begin{subsection}{Acknowledgements}
Much of this work is part of the author's thesis under the advisement of Dusa McDuff.  The author is deeply grateful to her for many valuable discussions and advice.  The author would also like to thank Ba\c{s}ak G\"{u}rel and Aleksey Zinger  for their helpful conversations and suggestions.
 \end{subsection} 
  
 \end{section}


\begin{section}{Positive and Semipositive Paths}\label{pos_semipos2}

\begin{subsection}{Positive and Semipositive Paths}
Consider $\R^{2n}$ with the standard symplectic structure $\omega$ and almost complex structure $J$.  Recall that $Sp(2n)$ consists of all matrices satisfying $A^TJA = J$, and its Lie algebra, \textbf{sp}$(2n)$, consists of matrices which satisfy $JAJ = A^T$.  Throughout, when in $\R^{2n}$, we use these structures and the metric given by, $g(\cdot, \cdot) = \omega(\cdot, J\cdot)$.

A differentiable path in $A_t \in Sp(2n)$ is called \emph{positive} if it satisfies
\begin{eqnarray*}
\frac{d}{dt}A_t = JQ_tA_t
\end{eqnarray*}
\noindent 
where $Q_t$ is a positive definite symmetric matrix for each $t$.  Such paths are natural generalizations of circle actions near maxima of the corresponding autonomous Hamiltonian.  The linearization of a Hamiltonian has the form
\begin{eqnarray*}
H_t(x) = const - \frac{1}{2}\langle x, Q_tx\rangle,
 \end{eqnarray*}

\noindent
and if it is semipositive, it corresponds precisely with the linearized flow near a maximum of some Hamiltonian.  The simplest example of such a path is the counter clockwise rotation $A_t = e^{2\pi kJt}$, with $k > 0$.  Here $Q_t = 2\pi kI$.  In the event that $Q_t$ is symmetric, but only positive semidefinite (i.e., $Q_t$ could have eigenvalues of zero for certain values of $t$), the path is called \emph{semipositive}.  In \cite{sl} Slimowitz proves the following:  

\begin{thm}\label{slimowitz1}(Slimowitz)
Let $n=1,2$ and let $A_t \in Sp(2n)$ be a positive loop.  Then $A_t$ can be homotoped through positive loops to a circle action.
\end{thm}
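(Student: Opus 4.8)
\noindent\emph{Sketch of the intended approach.}

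The plan is to isolate one homotopy invariant of a positive loop and then show that every positive loop with a given value of that invariant can be pushed, through positive loops, onto a circle action. The invariant is the winding number $k$ of $t\mapsto A_t$ in $\pi_1(Sp(2n))\cong\Z$ (recall $Sp(2n)$ retracts onto $U(n)$). The defining relation $\frac{d}{dt}A_t=JQ_tA_t$ with $Q_t$ positive definite forces $k\ge 1$ by a standard positivity argument (and $k\ge 2$ when $n=2$, since a circle action in $Sp(4)$ generated by a positive-definite form must have both rotation speeds $\ge 1$). Since a homotopy of based loops preserves the $\pi_1$-class, it suffices to realize each admissible $k$ by a circle action, which is immediate, and to join an arbitrary positive loop, through positive loops, to the circle action with the same $k$.

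For $n=1$, where $Sp(2)\cong SL(2,\R)$, I would pass to the action on the circle $S^1=(\R^2\setminus 0)/\R_{>0}$ of oriented directions. Writing a solution of $\dot v=JQ_tv$ in polar coordinates one gets $\dot\theta=\langle v,Q_tv\rangle/|v|^2>0$, so $A_t$ projects to a smooth path $f_t\in\mathrm{Diff}_+(S^1)$ with $f_0=f_1=\mathrm{id}$ whose generator is everywhere a strictly positive rotation field and whose total winding is exactly $k$. One then normalizes: deform through positive loops so the generating form becomes scalar, $Q_t=c_tI$ with $\int_0^1 c_t\,dt=2\pi k$, using convexity of the positive-definite cone together with a small correction to keep $A_1=\mathrm{id}$ along the way, and finally reparametrize $t$ to reach $A_t=e^{2\pi ktJ}$. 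The only delicate point here is the bookkeeping that keeps every intermediate path a \emph{positive loop}; it amounts to a connectedness statement for families $\{Q_t\}$ of positive-definite forms constrained by the time-one condition.

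The case $n=2$ is the real obstacle. A positive loop in $Sp(4)$ need not remain elliptic: it can pass through the mixed stratum (one elliptic and one hyperbolic eigenvalue pair) or even the loxodromic stratum (a complex quadruple off the unit circle), so there is no direct reduction to rotations. My strategy would be to deform the loop, through positive loops, until it preserves a symplectic splitting $\R^4=V_1\oplus V_2$ and is block diagonal; its two blocks are then positive loops in $Sp(2)$, and the previous paragraph produces the circle action generated by a positive-definite diagonal form. To reach split form I would combine two ingredients: first, a shortening principle in the spirit of Lalonde--McDuff's analysis of positive paths, by which a positive loop whose winding number (or an associated length/index) exceeds a fixed bound can be homotoped through positive loops to one of strictly smaller index, leaving only finitely many ``short'' positive loops to treat; and second, a hands-on study of those short loops via the explicit stratification of $Sp(4)$ by eigenvalue configuration, in which the one-sidedness forced by positivity controls how the path crosses the walls separating the elliptic, mixed, and loxodromic regions and lets one straighten it to block form.

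I expect this second ingredient to be the hardest part: one must control, while maintaining positivity throughout, the passage of $A_t$ through or around the loxodromic stratum and its collisions with the eigenvalue $1$ or $-1$. It is also where the hypothesis that we have a genuine \emph{loop} is indispensable — the return of the path to the maximally degenerate matrix $\mathrm{id}$ pins down the winding contributed by each pair of eigenvalues, which is what makes the straightening to a split, hence circular, normal form possible. Once the $n=1$ case and the split reduction for $n=2$ are in hand, the theorem follows.
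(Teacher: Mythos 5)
The paper does not actually prove this statement: it is imported verbatim from Slimowitz \cite{sl} and used as a black box, so there is no internal proof to compare yours against. Judged on its own terms, your proposal is a reasonable outline of the known strategy (it is close in spirit to Lalonde--McDuff's analysis of positive paths and to Slimowitz's actual argument, which does proceed via the eigenvalue stratification of $Sp(4)$), but it is an outline rather than a proof, and the gaps sit exactly where the content of the theorem lies.

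Concretely: (1) in the $n=1$ normalization, convexly interpolating the generators $Q_t$ of two positive loops preserves positivity of each intermediate \emph{path} but destroys the loop condition $A_1=Id$; the ``small correction'' you invoke to restore closedness while staying in the positive cone is precisely the nontrivial step, and the ``connectedness statement for families $\{Q_t\}$ constrained by the time-one condition'' that you say it amounts to is asserted, not proved. (2) For $n=2$ the entire argument is deferred: you correctly identify that a positive loop in $Sp(4)$ can pass through the mixed and loxodromic strata, but you give no mechanism for deforming it to block-diagonal form while maintaining positivity, no definition of the index your ``shortening principle'' is supposed to decrease, and no treatment of the finitely many residual short loops beyond calling it ``a hands-on study.'' Since the $Sp(4)$ case is the reason this theorem required a separate paper --- and the reason the present paper's main results are restricted to $\dim M\le 4$ --- a proposal that leaves that case as a declared intention has not engaged with the actual difficulty. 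The $\pi_1$ bookkeeping (winding number $k\ge 1$, realizability of each admissible class by a circle action) is correct but is the easy part.
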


As mentioned, in principle this should be true in all dimensions, but the details have only been worked out for these cases.  Slimowitz shows further that

\begin{lemma}\label{homotop}(Slimowitz)
In $Sp(4)$, any two loops of matrices of the form 
\begin{eqnarray*}
\left( \begin{array}{cc} 
e^{2\pi b_iJt} & 0  \\ 
0 & e^{2\pi d_iJt}  \\  
\end{array} \right) 
\end{eqnarray*}

\noindent
for $i=1,2$ and $t \in [0,1]$ are homotopic through positive loops provided $b_i, d_i \ge 1$ and $b_1+d_1 = b_2+d_2$.
\end{lemma}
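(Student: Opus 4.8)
The plan is to reduce the lemma to a single \emph{elementary move} and then chain copies of it. Set $S:=b_1+d_1=b_2+d_2$ (the $b_i,d_i$ are positive integers, integrality being forced by the loop condition $e^{2\pi b_iJ}=e^{2\pi d_iJ}=\mathrm{Id}$). Then both $(b_1,d_1)$ and $(b_2,d_2)$ occur among the lattice points $(k,S-k)$, $1\le k\le S-1$, on the segment $\{b+d=S,\ b,d\ge 1\}$ (when $S=2$ there is nothing to prove). Writing $\La^{b,d}_t:=\diag\!\bigl(e^{2\pi bJt},\,e^{2\pi dJt}\bigr)$, I would first establish the elementary move: for all integers $b\ge 2$, $d\ge 1$, the loop $\La^{b,d}_t$ is homotopic through positive loops in $Sp(4)$ to $\La^{b-1,d+1}_t$. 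Granting this, the move connects each consecutive pair $(k,S-k)$ and $(k-1,S-k+1)$ of lattice points (valid for $2\le k\le S-1$, where the hypotheses $b=k\ge 2$ and $d=S-k\ge 1$ both hold), so transitivity of ``homotopic through positive loops'' connects $\La^{b_1,d_1}_t$ to $\La^{b_2,d_2}_t$.

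To prove the elementary move I would set up a little bookkeeping. For a smooth path $A_t$ in $Sp(2n)$ with $A_0=\mathrm{Id}$, put $Q^A_t:=-J\dot A_tA_t^{-1}$, so that $A_t$ is positive exactly when every $Q^A_t$ is symmetric positive definite; a short computation from $A^TJA=J$ (via $JAJ=-A^{-T}$) yields the product formula $Q^{AB}_t=Q^A_t+A_t^{-T}Q^B_t A_t^{-1}$. Since congruence preserves positive (semi)definiteness, if $A_t$ is positive with $Q^A_t\ge c\,\mathrm{Id}$ and $Q^B_t$ is merely symmetric positive semidefinite, then $Q^{AB}_t\ge c\,\mathrm{Id}$, so $A_tB_t$ is again positive. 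Now factor $\La^{b,d}_t=\La^{b-1,d}_t\cdot M^0_t$, where $M^0_t:=\diag\!\bigl(e^{2\pi Jt},\mathrm{Id}\bigr)$ rotates only the first $\C$-factor, and choose a path $w_s\in U(2)\subset Sp(4)$ with $w_0=\mathrm{Id}$ and $w_1$ the unitary interchanging the two $\C$-factors, so that $w_1M^0_tw_1^{-1}=\diag\!\bigl(\mathrm{Id},e^{2\pi Jt}\bigr)$. Put $M^s_t:=w_sM^0_tw_s^{-1}$ and
\[
\Ga^s_t:=\La^{b-1,d}_t\cdot M^s_t,\qquad s\in[0,1].
\]
Then $\Ga^0=\La^{b,d}$, $\Ga^1=\La^{b-1,d+1}$, and each $\Ga^s$ is a based loop in $Sp(4)$. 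Because $w_s$ is complex-linear it commutes with $J$, so $Q^{M^s}_t=w_s Q^{M^0}_t w_s^{-1}=2\pi\,w_s\diag(\mathrm{Id},0)w_s^{-1}$ is symmetric positive semidefinite, while $Q^{\La^{b-1,d}}_t\equiv 2\pi\,\diag\!\bigl((b-1)\mathrm{Id},\,d\,\mathrm{Id}\bigr)\ge 2\pi\,\mathrm{Id}$ precisely because $b-1\ge 1$ and $d\ge 1$. By the product formula, $Q^{\Ga^s}_t\ge 2\pi\,\mathrm{Id}>0$ for all $s,t$, so $s\mapsto\Ga^s$ is the required homotopy through positive loops, and the lemma follows.

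The hard part is the positivity bookkeeping, not the topology. Concretely one must: derive the product formula for $Q$ from the symplectic relations; check that conjugating the ``transfer'' loop $M^0_t$ by $w_s\in U(2)$ keeps its generator symmetric positive \emph{semi}definite (this is where complex-linearity of $w_s$ enters); and observe that the hypothesis $b_i,d_i\ge 1$ is exactly what guarantees $Q^{\La^{b-1,d}}_t\ge 2\pi\,\mathrm{Id}$, so that the merely semidefinite contribution of $M^s_t$ cannot destroy positive definiteness of the product $\La^{b-1,d}_t M^s_t$. Note that this argument does not use Theorem \ref{slimowitz1}.
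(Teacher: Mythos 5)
Your argument is correct. Note, however, that the paper does not prove this lemma at all: it is imported verbatim from Slimowitz \cite{sl}, so there is no in-paper proof to compare against, and what you have supplied is a self-contained replacement for the citation. Your reduction is clean: the product formula $Q^{AB}_t=Q^A_t+A_t^{-T}Q^B_tA_t^{-1}$ follows correctly from $JA=A^{-T}J$, and since congruence preserves positive semidefiniteness, multiplying a loop with $Q^A_t\ge 2\pi\,\mathrm{Id}$ by one with $Q^B_t\ge 0$ keeps the product uniformly positive; the factorization $\La^{b,d}_t=\La^{b-1,d}_t\,M^0_t$ and the conjugating path $w_s\in U(2)$ then give exactly the elementary move $(b,d)\rightsquigarrow(b-1,d+1)$, each $\Ga^s$ is genuinely a based loop since $M^s_1=w_s\,\mathrm{Id}\,w_s^{-1}=\mathrm{Id}$, and chaining the move along the segment $b+d=S$ uses the hypothesis $b_i,d_i\ge 1$ in precisely the right place (the first coordinate never drops below $2$ until the final step). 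Two small points you leave implicit but which do hold: the symmetry of $Q^{M^s}_t=2\pi\,w_s\,\diag(\mathrm{Id},0)\,w_s^{-1}$ uses that $w_s\in U(2)$ is orthogonal as a real $4\times 4$ matrix (so conjugation is congruence), and the integrality of $b_i,d_i$ is indeed forced by the loop condition as you say. Compared with Slimowitz's original treatment, which sits inside a more elaborate analysis of the strata of positive paths in $Sp(4)$ following Lalonde--McDuff, your argument is more elementary and visibly generalizes to $Sp(2n)$ with $\sum b_i$ fixed and all $b_i\ge 1$; what it does not give is the stronger Theorem \ref{slimowitz1}, which genuinely needs the finer structure theory.
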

\end{subsection}


\begin{subsection}{Proof of Theorem \ref{fourdimdeg}}
In our setting we wish to consider Hamiltonians on manifolds.  Fixed maxima must be fixed points of the associated flow for all time (i.e., $\phi^H_t(x) = x, \forall t$).  Choosing a Darboux chart around such a point $x$, $H_t$ may be written as
\begin{eqnarray}\label{localham}
H_t(x) = const - \frac{1}{2}\langle x, Q_tx\rangle + O(\|x\|^3)
\end{eqnarray}

\noindent
and as before we will call the path (semi)positive if $Q_t$ is positive (semi)definite. As the point $x$ is only assumed to be a fixed maximum, it may be degenerate and we may only assume $Q_t \ge 0$ and the flow of its linearization is a semipositive path.  

In \cite{mc} McDuff proved the following result:

\begin{lemma}(McDuff)
Suppose the loop $\gamma$ in $Ham(M, \omega)$ has a nondegenerate fixed maximum at $x_0$.  Suppose also that the linearized flow at $x_0$ is homotopic through positive paths to a linear circle action.  Then $\gamma$ is homotopic through loops of Hamiltonian diffeomorphisms with fixed maximum at $x_0$ to a loop $\gamma'$ that is the given circle action near $x_0$.
\end{lemma}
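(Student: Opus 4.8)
The plan is to factor the homotopy into two stages: first deform $\ga$, through loops with fixed maximum at $x_0$, so that it becomes \emph{linear} on a neighbourhood of $x_0$; then deform the resulting linear model, using the hypothesized homotopy of positive loops, until it is the circle action. Fix a Darboux chart sending a neighbourhood of $x_0$ to a ball $B_{r_0}\subset\R^{2n}$ with $x_0\mapsto 0$, in which $H_t(x)=c_t-\tfrac12\langle x,Q_tx\rangle+O(\|x\|^3)$. Since $x_0$ is a maximum $Q_t\ge 0$, and since it is nondegenerate for all $t$ the linearized flow $A_t:=D\phi_t^H(x_0)$ satisfies $\dot A_t=JQ_tA_t$ with $Q_t$ invertible, hence $Q_t>0$; thus $A_t$ is a \emph{positive} loop in $Sp(2n)$ with quadratic generator $-\tfrac12\langle x,Q_tx\rangle$. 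The only feature of a loop near $x_0$ relevant to ``$x_0$ is a fixed maximum'' is that $x_0$ stays fixed and the fibrewise Hessian at $x_0$ stays negative definite (equivalently $Q_t>0$), so at each stage it is enough to monitor that condition.

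\textbf{Stage 1: linearization.} The idea is the standard dilation trick: on the chart, $\psi_\la^{-1}\circ\phi_t^H\circ\psi_\la$, where $\psi_\la(x)=\la x$, is for small $\la$ a Hamiltonian loop on a fixed ball around $x_0$ with generator $\la^{-2}H_t(\la x)=\mathrm{const}-\tfrac12\langle x,Q_tx\rangle+O(\la\|x\|^3)$, which converges in $C^\infty$ on compact sets, as $\la\to 0$, to the pure quadratic $-\tfrac12\langle x,Q_tx\rangle$; correspondingly the loop converges to $A_t$. This gives a $1$-parameter family of Hamiltonian loops on a ball, all fixing $x_0$ with Hessian $Q_t>0$, agreeing at $\la=1$ with $\ga$. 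I would then globalize it (see the obstacle below) to a homotopy of loops of $Ham(M,\om)$ with fixed maximum at $x_0$, ending at a loop $\ga_0$ that equals the linear loop $A_t$ on a small ball $B_\rho\ni x_0$.

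\textbf{Stage 2: installing the circle action.} Now invoke the hypothesis: a homotopy $s\mapsto A_t^s$ of positive loops in $Sp(2n)$, $s\in[0,1]$, with $A_t^0=A_t$ and $A_t^1=\La_t$ the given linear circle action, each $A_t^s$ generated by a quadratic $-\tfrac12\langle x,Q_t^sx\rangle$ with $Q_t^s>0$. This is again a $1$-parameter family of Hamiltonian loops on a ball, all fixing $x_0$ with positive-definite Hessian, agreeing at $s=0$ with $\ga_0$ on $B_\rho$; globalizing it produces a homotopy of loops $\ga_s$ of $Ham(M,\om)$, each with fixed maximum at $x_0$, from $\ga_0$ to a loop $\ga'=\ga_1$ which equals $\La_t$ near $x_0$, i.e.\ is the given circle action near $x_0$. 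Concatenating the two stages finishes the proof.

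\textbf{The main obstacle} is the globalization step common to both stages: promoting a homotopy of the \emph{germ} (here even of the $1$-jet) of the loop at $x_0$ to an honest homotopy of \emph{global} loops of Hamiltonian diffeomorphisms, while preserving at every parameter value both the loop condition $\phi_1=\mathrm{Id}$ and the fixed-maximum condition at $x_0$. One cannot simply cut off the local generating Hamiltonians, since a cut-off Hamiltonian need not generate a loop. The fix is to work with the \emph{difference}: for each $s$ the family $\phi_t^s\circ(\phi_t^0)^{-1}$ is a Hamiltonian loop on a ball fixing $x_0$ which is nullhomotopic through such loops (contract along $s'\in[0,s]$ back to the constant loop), and a nullhomotopic loop of symplectomorphisms of a ball fixing the centre can be realized by a Hamiltonian \emph{loop} supported in a slightly larger ball, which one then extends by the identity over $M$ and composes with $\ga_0$. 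Carrying this out smoothly in $s$, and verifying that in Stage 2 the positivity of $Q_t^s$ is precisely what keeps $x_0$ a nondegenerate fixed maximum throughout, is where the real work lies; in Stage 1 the same device also absorbs the fact that the dilations $\psi_\la$ are not symplectic, so that their conjugation does not a priori act on $M$.
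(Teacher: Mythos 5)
This lemma is quoted from \cite{mc} and the paper gives no proof of it, so there is no internal argument to compare against; your proposal has to be judged as a reconstruction of McDuff's proof. In outline it is the right reconstruction: nondegeneracy plus the maximum condition do force $Q_t>0$, so the linearization is a positive loop; the two-stage scheme (linearize near $x_0$, then push the linear model along the given homotopy of positive loops) is the correct architecture; and your globalization device is the standard and essentially correct one --- the difference loop $\phi_t^s\circ(\phi_t^0)^{-1}$ is a \emph{contractible} loop of germs at $x_0$, and a contractible loop can be localized (cut off the generating Hamiltonian of the contraction in the contraction parameter, not of the loop itself, so that the loop property survives), whereas the circle action itself cannot be localized, which is exactly why the conclusion only says $\ga'$ equals the circle action \emph{near} $x_0$.

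The genuine gap is your claim that the only condition to monitor is that the Hessian of the Hamiltonian at $x_0$ stays negative definite. That controls only the statement ``$x_0$ is a nondegenerate local maximum,'' and even that only on the shrinking ball where the localized correction agrees with $\phi^s_t\circ(\phi^0_t)^{-1}$. In the cut-off annulus the generating Hamiltonian of the localized correction loop $\Tilde\psi^s_t$ is not under control: it is produced by the contraction-and-cut-off construction, it need not be small, and it can raise the value of the composite Hamiltonian $\Tilde K_{s,t}+H^0_t\circ(\Tilde\psi^s_t)^{-1}$ above its value at $x_0$. Since the lemma is applied in this paper to \emph{global} maxima (Theorem \ref{fourdimdeg} concludes that $x_0$ is still a fixed global maximum), this is not a cosmetic point: without an additional correction --- for instance a large negative bump supported on the annulus, exactly the device used in the proof of Lemma \ref{firstspread} to dominate uncontrolled error terms while preserving the loop property --- the homotopy you build is not visibly ``through loops with fixed maximum at $x_0$.'' Identifying this as ``where the real work lies'' is honest, but it is precisely the step that distinguishes a proof from a plan, and as written the plan would fail there for global maxima.
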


\noindent
Thus given a degenerate global maximum, we must construct a new loop with a nondegenerate global maximum, and then apply the results of McDuff and Slimowitz to obtain Theorem \ref{fourdimdeg}.

The first results deal with the case when $Q_{t_0} > 0$ for some $t_0$, and thus is a positive path for some $\epsilon$ time.  We describe a method of ``spreading out the positivity" to homotop our path to a new one which is positive on all of $[0,1]$.  We do so in such a way that, if $x$ is a maximum of $H_t$ on some set $V$, it will remain a maximum of the new Hamiltonian on $V$.

\begin{lemma}\label{firstspread}
Let $\{\phi_t^H\} \subset Ham(M, \omega)$ for $t \in [0,1]$ be a path of Hamiltonian diffeomorphisms whose generating function, $H_t$, has a fixed local maximum at $x_0$.  Let $-\frac{1}{2}\langle x, Q_t x \rangle$ be the quadratic part of $H_t$, 
and let $I^+ = \{t \in [0,1] | Q_t > 0\}$.  If $\emptyset \ne I^+ \ne [0,1]$ choose $t_0 \in I^+$ and $t_1 \notin I^+$. Then the path may be homotoped through semipositive paths with fixed endpoints to a new one, whose quadratic part is positive in a $\delta'>0$ neighborhood of $t_1$ and remains so in $I^+$.  Furthermore, if $x_0$ was a maximum of $H_t$ on a neighborhood $V$ of $x_0$ for all $t$, it will remain a maximum of $F_t$ on $V$ for all $t$, and $\delta'$ will depend only on the initial neighborhood of $t_0 \in I^+$.
\end{lemma}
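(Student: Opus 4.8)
The plan is to interpolate the quadratic part $Q_t$ in time while keeping it positive semidefinite throughout and preserving the maximum property. First I would set up the interpolation: choose small $\de>0$ so that $[t_0-\de,t_0+\de]\subset I^+$ and let $2c>0$ be a lower bound for the smallest eigenvalue of $Q_t$ on this interval (so $Q_t\ge c\,I$ there). The idea is to pick a smooth nonnegative bump function $\rho$ supported near $t_1$ with $\int \rho = 1$, and to define a new family whose quadratic part is $Q_t + s\,\rho(t)\,c\,I$ for $s$ running from $0$ to $1$; this adds a positive-definite contribution in a neighborhood of $t_1$ while changing nothing outside the support of $\rho$, so positivity on $I^+$ is untouched and the sum stays $\ge 0$ everywhere. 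The resulting $\de'$ is the radius of the support of $\rho$, which we may take to depend only on $\de$ (hence only on the initial neighborhood of $t_0$), as claimed.

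Next I would promote this modification of the quadratic part to an honest homotopy of Hamiltonian \emph{diffeomorphisms} with fixed endpoints. Working in the fixed Darboux chart around $x_0$, write $H_t$ as in \eqref{localham}; the natural candidate for the new Hamiltonian is $F_t^s(x) = H_t(x) - \tfrac{s}{2}\rho(t)\,c\,\chi(x)\|x\|^2$, where $\chi$ is a cutoff equal to $1$ near $x_0$ and supported inside the Darboux chart, extended by $H_t$ outside. I would then check: (i) at $s=0$ this is $H_t$ and the endpoints $t=0,1$ are unchanged for all $s$ provided $\rho$ vanishes near $0$ and $1$ (arrange this when choosing $\rho$, which is possible since $t_1\in(0,1)$ or, if $t_1$ is an endpoint, by working on $[0,1]$ as a circle since we have a loop); (ii) $x_0$ remains a fixed point of the flow of $F_t^s$ for every $s,t$, since the added term has a critical point at $x_0$ with vanishing value; (iii) the quadratic part of $F_t^s$ at $x_0$ is exactly $Q_t + s\rho(t)cI \ge 0$, so the path stays semipositive.

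Finally I would verify the maximum property: on the neighborhood $V$ (shrink $V$ inside the chart and inside $\{\chi=1\}$ if necessary), $F_t^s(x) - F_t^s(y)$ for $x=x_0$ equals $H_t(x_0)-H_t(y) + \tfrac{s}{2}\rho(t)c\|y\|^2 \ge 0$, since $H_t(x_0)\ge H_t(y)$ by hypothesis and the extra term is nonnegative; so $x_0$ stays a fixed local maximum of $F_t := F_t^1$ on $V$, and indeed of every $F_t^s$. I expect the main obstacle to be bookkeeping rather than ideas: one must choose the cutoff $\chi$ and the neighborhood $V$ compatibly so that the added term is genuinely quadratic (not just $O(\|x\|^2)$ with uncontrolled sign) exactly where it matters, handle the case $t_1\in\{0,1\}$ using the loop (circle) structure so that fixing endpoints is automatic, and confirm that shrinking $V$ does not interfere with the statement, which only asserts the conclusion on the given $V$. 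None of these requires more than careful choices of bump functions, but getting all the compatibility conditions simultaneously is the delicate point.
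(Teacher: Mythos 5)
Your construction preserves semipositivity and the maximum on $V$, but it fails the fixed-endpoint requirement, and this is the whole difficulty of the lemma. Setting $F_t^s = H_t - \tfrac{s}{2}\rho(t)\,c\,\chi(x)\|x\|^2$ with $\rho$ supported near $t_1$ and vanishing near $t=0,1$ only guarantees that the \emph{generating functions} agree with $H_t$ near the endpoints of the time interval; it does not guarantee that the time-one maps agree. The flow $\phi^{F^s}_1$ is the composition of all intermediate flows, and inserting a nontrivial perturbation in the middle of $[0,1]$ changes $\phi_1$ (the added term generates a genuine rotation near $x_0$ whose total effect over the support of $\rho$ is nonzero). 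So $s\mapsto\{\phi^{F^s}_t\}$ is not a homotopy with fixed endpoints, and if $\{\phi^H_t\}$ is a loop, the perturbed path is no longer a loop. A symptom of the missing idea is that your argument barely uses $t_0$: you only extract the constant $c$ from it, and nothing stops you from taking $\rho$ supported on all of $[0,1]\less I^+$ and achieving positivity in one step --- which would make the finite iteration in Proposition \ref{secondspread}, and the careful statement that $\delta'$ depends on the neighborhood of $t_0$ in $I^+$, pointless.

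The paper's proof fixes exactly this by making the perturbation a \emph{loop}: it takes a compactly supported autonomous Hamiltonian $K(x)=\alpha(\|x\|)\|x\|^2$ whose quadratic part at $0$ is $-a\|x\|^2$, and defines $K_{s,t}$ to equal $+s\beta(|t-t_1|)K$ near $t_1$ and $-s\beta(|t-t_0|)K$ near $t_0$ (zero elsewhere). Because $K$ is autonomous and the two time-profiles cancel, the generated path $\psi^K_{s,t}$ satisfies $\psi^K_{s,0}=\psi^K_{s,1}=\mathrm{Id}$, so the composition $\phi^F_{s,t}=\psi^K_{s,t}\circ\phi^H_t$ has the same endpoints as $\phi^H_t$ for every $s$. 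The positivity added near $t_1$ is thus paid for by subtracting positivity near $t_0$, which is affordable precisely because $Q_t$ has a uniform eigenvalue margin $a$ on a $\delta$-neighborhood of $t_0$; this is why $t_0\in I^+$ is needed, why only a $\delta'=\delta/3$ gain is obtained per step, and why the spreading must be iterated. To repair your argument you would need to introduce this compensating term near $t_0$ (or some equivalent device forcing the perturbing path to be a loop based at the identity), at which point you recover the paper's proof.
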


\begin{proof}
For the purposes of this proof, consider $t$ as a variable in $\R/\Z$.  Let $\delta$ be such that $Q_t>0$ for $|t-t_0|<\delta$.  Note that we must have $|t_1-t_0|\ge \delta$.  We will show that we may take $\delta' = \delta/3$.

Let $a$ be smaller than any of the eigenvalues of $Q_t$ for $|t-t_0|<\delta/2$ and let $b >> 1$.  Define a function $\alpha : \R_{\ge 0} \rightarrow \R$ satisfying: $\alpha'(r) \ge 0$, $\alpha(r) = br-a$ for $r < a/2b$, and $\alpha(r) = 0$ for $r > a/b$.  Next consider the autonomous Hamiltonian $K$ defined on $\R^{2n}$ given by $K(x) = \alpha(\| x \|)\| x \|^2$.  Also, let $\beta : [0,1] \rightarrow [0,1]$ be a smooth nonincreasing function which is $1$ on $[0, \delta/3]$, and $0$ on $[\delta/2, 1]$.

For each value of $0 \le s \le 1$ define a function:
\begin{eqnarray*}
K_{s,t} = \left\{ \begin{array}{ll}
0 & \textrm{for $t < t_1 - \delta/2$}\\
s\beta(|t - t_1|) K & \textrm{for $t_1 - \delta/2 \le t \le t_1 + \delta/2$}\\
0  & \textrm{for $t_1 + \delta/2 \le t \le t_0 - \delta/2$}\\
-s\beta(|t - t_0|)K & \textrm{for $t_0 - \delta/2 \le t \le t_0 + \delta/2$}\\
0 & \textrm{for $t _0 + \delta/2 \le t$}
\end{array} \right.
\end{eqnarray*}

\noindent 
For each value of $s$, this time-dependent function will generate a smooth path of Hamiltonian diffeomorphisms, $\{\psi^K_{s, t}\}$.   Since any perturbation from the identity map is eventually undone, the path will satisfy $\psi^K_{s, 0} = \psi^K_{s,1} = Id$, regardless of the values of $t_0$ and $t_1$, and thus will always be a loop.  

We now consider the composition $\phi^F_{s, t} = \psi^K_{s, t} \circ \phi^H_t$, and note that the corresponding time dependent family of functions $F_{s,t}$ are given by the formula 
\begin{eqnarray}\label{formula}
F_{s,t} = K_{s,t} \# H_t = K_{s,t} + H_t \circ (\phi^K_{s, t})^{-1}.
\end{eqnarray}

We now claim that for a suitable choice of $b$, our path $\phi^F_{s, t}$ is positive for $t \in I^+$ and $|t-t_1|<\delta/3$.  To show positivity, we need only show that Hamiltonian has non-degenerate quadratic part at $x_0$.  Fix $s$ and $t$ with $|t-t_1|<\delta/3$ and $v \in \R^{2n}$, and consider the limit
\begin{eqnarray*}
\lim_{r \rightarrow 0}\frac{\Big(K_{s,t}(rv) + H_t \circ (\psi^K_{s,t})^{-1}(rv)\Big)}{\|rv\|^2} & = & -a\beta(|t-t_1|)s + \lim_{r \rightarrow 0} \frac{H_t \circ (\phi^K_{s, t})^{-1}(rv)}{\|rv\|^2} \\
& \le & -a\beta(|t-t_1|)s \\
& < & 0
\end{eqnarray*}

\noindent 
where the inequality and subsequent minus sign on the right are explained by our convention of the quadratic portion actually being negative semidefinite.

Calling $Q'_t$ the quadratic portion of $F_t$, $Q'_t > 0$ for $|t-t_0|<\delta/2$, since $a$ was chosen smaller than any of the eigenvalues of $Q_t$ here.  To see that $Q'_t>0$ on the rest of $I^+$, we note that $\beta =0$ in this region, and 

\begin{eqnarray*}
\lim_{r \rightarrow 0}\frac{\Big(K_{s,t}(rv) + H_t \circ (\psi^K_{s,t})^{-1}(rv)\Big)}{\|rv\|^2} & = &  \lim_{r \rightarrow 0} \frac{H_t \circ (\phi^K_{s, t})^{-1}(rv)}{\|rv\|^2} \\
& < & 0
\end{eqnarray*}

\noindent
As $s$ and $t$ are fixed while taking this limit, the inequality holds by the definition of $I^+$.

Finally, our perturbed function will remain a maximum in $V$ for $|t - t_0| \ge \delta'$, and by choosing $b$ large enough (the choice depends on the third order terms of the initial $H_t$), it will remain a maximum for $|t - t_0| < \delta'$, as well.
\end{proof}

\begin{prop}\label{secondspread}
Let $\{\phi^H_t\}$ for $t \in [0, 1]$ be a path of Hamiltonian diffeomorphisms based at $Id$ with generating function $H_t$.  Suppose $x_0$ is a maximum of $H_t$ on some neighborhood $V$ of $x_0$, for all $t$.  Letting $Q_t$ be as in Lemma \ref{firstspread}, if $Q_{t_0} > 0$ for some $0 \le t_0 \le 1$, then $\{\phi^H_t\}$ may be homotoped through semipositive paths with fixed endpoints to a new path $\{\phi^F_t\}$ whose associated quadratic portion is strictly positive for all $t \in [0,1]$.  Furthermore $x_0$ will be a maximum of $F_t$ on $V$ for all $t$, as well.
\end{prop}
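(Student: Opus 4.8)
The plan is to bootstrap Lemma \ref{firstspread} by iterating it finitely many times, using the key fact that the neighborhood size $\delta'$ produced by each application depends only on the initial neighborhood of the point of positivity, not on the particular bump being spread. First I would invoke the hypothesis to fix $t_0$ with $Q_{t_0}>0$; by continuity (and compactness of $\R/\Z$) there is a $\delta>0$ with $Q_t>0$ for $|t-t_0|<\delta$, so the set $I^+=\{t: Q_t>0\}$ is open, nonempty, and contains the arc $(t_0-\delta,t_0+\delta)$. If $I^+=[0,1]$ we are already done, so assume $I^+\ne[0,1]$.

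Next I would set up the iteration. Cover the circle $\R/\Z$ by finitely many open arcs of length $\delta/3$, say centered at points $t_1,\dots,t_N$; since $N$ is finite and fixed by $\delta$ alone, I can arrange to apply Lemma \ref{firstspread} successively at each $t_j\notin I^+$. At step $j$, the current path is semipositive with quadratic part positive on $I^+$ together with the arcs already treated; Lemma \ref{firstspread} then homotops it, through semipositive paths with fixed endpoints and keeping $x_0$ a maximum on $V$, to a new path whose quadratic part is positive on a $\delta/3$-neighborhood of $t_j$ and remains positive everywhere it was positive before. The crucial point — which is exactly what the statement ``$\delta'$ will depend only on the initial neighborhood of $t_0\in I^+$'' is designed to give — is that the gain of a full $\delta/3$-arc at each step does not shrink as we proceed, because at every stage we still have a fixed arc of positivity of length at least $\delta$ (indeed $\ge \delta/3$) from which to ``borrow.'' Hence after at most $N$ applications the union of $I^+$ with the new arcs covers all of $[0,1]$, and the composed homotopy (a concatenation of finitely many semipositive homotopies with fixed endpoints) carries $\{\phi^H_t\}$ to a path $\{\phi^F_t\}$ with $Q'_t>0$ for every $t$, still with $x_0$ a maximum of $F_t$ on $V$.

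A small technical point to address: each application of Lemma \ref{firstspread} involves a choice of the constant $b$, which depends on the third-order terms of the \emph{current} Hamiltonian, so in principle $b$ could grow from step to step. This is harmless — at each of the finitely many stages we simply choose $b$ large enough for that stage — but I would remark on it explicitly, since the homotopies must be performed sequentially rather than simultaneously. One also needs that the modifications $K_{s,t}$ introduced at $t_j$ are supported in disjoint-enough time windows from the residual positive arc used as the source; this is automatic because the construction in Lemma \ref{firstspread} only requires $|t_1-t_0|\ge\delta$, which holds for our chosen cover.

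The main obstacle is precisely the uniformity of $\delta'$: one must be sure that spreading positivity to a new arc does not degrade the positivity already established on old arcs, and does not force the next increment to be smaller. This is handled by the last clause of Lemma \ref{firstspread} (positivity ``remains so in $I^+$'') and by the fact that the increment $\delta/3$ is controlled by the fixed initial arc rather than by the — possibly tiny — arcs generated along the way. Once that is in hand, the argument is a straightforward finite induction, and the concatenation of semipositive homotopies with fixed endpoints is again such a homotopy.
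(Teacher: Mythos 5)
Your proposal is correct and follows essentially the same route as the paper: the paper's proof is precisely to iterate Lemma \ref{firstspread} finitely many times, using the fact that $\delta'$ depends only on the initial neighborhood of $t_0$ in $I^+$ so that each step gains a fixed-length arc of positivity without degrading what was already gained. Your additional remarks on the finiteness of the cover and the stage-by-stage choice of $b$ are sensible elaborations of details the paper leaves implicit.
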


\begin{proof}
As the $\delta' > 0$ value from Lemma \ref{firstspread} depended only on the neighborhood of $t_0$ in $I^+$, we may carry out the process a finite number of times to homotop our path through semipositive paths with fixed endpoints to one which is positive for all $t$.  Furthermore, by construction, $x_0$ remains a maximum on $V$ throughout.
\end{proof}

The next result deals with the case when $Q_t \ngtr 0$ for any $t$, but $Q_{t_0} \ne 0$ for some $t_0$.  Thus while it is never a positive path, it is positive in at least one direction at time $t_0$.

\begin{prop}\label{thirdspread}
Let $\{\phi_t^H\} \subset Ham(M, \omega)$ for $t \in [0,1]$ be a path of Hamiltonian diffeomorphisms with generating function $H_t$.  Let $x_0$ be a maximum of $H_t$ on some neighborhood $V$ of $x_0$.  If $D\phi^H_t \ne Id$ for some $t_0$, then there is a new path, $\{\phi^K_t\}$, whose associated Hamiltonian, $K_t$, is nondegenerate at $x_0$ and for $t = t_0$.  Furthermore, $\phi_t^K$ can be chosen to be homotopic to $\{(\phi_t^H)^m\}$ for some $m \le 1 + dim(ker(D\phi^H_{t_0} - Id))$.  Furthermore, $x_0$ will remain a maximum of $K_t$ on $V$.
\end{prop}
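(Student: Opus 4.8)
The plan is to reduce to the local linear model and then build $\phi^K_t$ by composing conjugated copies of $\{\phi^H_t\}$. In a Darboux chart centred at $x_0$, write $A_t:=D\phi^H_t(x_0)\in Sp(2n)$ for the linearized flow and $-\tfrac12\langle x,Q_tx\rangle$ for the quadratic part of $H_t$, so that $\dot A_t=JQ_tA_t$ with $Q_t\ge 0$ (since $x_0$ is a fixed maximum). Via Hamilton's equation, the requirement ``$K_t$ nondegenerate at $x_0$ for $t=t_0$'' is equivalent to the quadratic part $Q'_{t_0}$ of $K_{t_0}$ being positive definite; and since $x_0$ must stay a maximum of $K_t$ one automatically has $Q'_t\ge 0$. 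So the task is to produce $\phi^K_t$, homotopic to a power of $\{\phi^H_t\}$ and keeping $x_0$ a fixed maximum of $K_t$ on $V$, with $\ker Q'_{t_0}=0$. Set $k:=\dim\ker(A_{t_0}-\id)$; since $A_{t_0}\ne\id$ we have $0\le k\le 2n-1$, and the claim is that $m\le k+1$ works.

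I would take
\[
\phi^K_t=\bigl(g_1\phi^H_tg_1^{-1}\bigr)\circ\cdots\circ\bigl(g_m\phi^H_tg_m^{-1}\bigr),\qquad g_i\in Ham(M,\om),\ \ g_i(x_0)=x_0,\ \ g_1=\id.
\]
Two features are then automatic. First, conjugation by a fixed element of the path-connected group $Ham(M,\om)$ is homotopic to the identity map, so each factor $g_i\phi^H_tg_i^{-1}$ is homotopic to $\{\phi^H_t\}$, and hence $\phi^K_t$ is homotopic to $\{(\phi^H_t)^m\}$. Second, $g_i\phi^H_tg_i^{-1}$ is generated by $H_t\circ g_i^{-1}$, which has $x_0$ as a fixed maximum on $g_i(V)$; since under the composition rule $F\#G=F+G\circ(\phi^F_t)^{-1}$ a common fixed maximum at a common fixed point is again a fixed maximum of $F\#G$ on a slightly smaller neighbourhood, $x_0$ stays a fixed maximum of $K_t$ on some $V'\subseteq V$ for all $t$ (and, as in the proof of Lemma \ref{firstspread}, the third-order bookkeeping then lets one take $V'=V$).

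The essential point is the linear algebra at $t=t_0$. Expanding $K_t$ near $x_0$ with the $\#$-formula gives $Q'_{t_0}=\sum_{i=1}^m C_i^{-T}Q_{t_0}C_i^{-1}$, where the $C_i\in Sp(2n)$ are explicit products built from $A_{t_0}$ and the $Dg_j(x_0)$; each summand is $\ge 0$ with kernel the subspace $C_i\ker Q_{t_0}$, so $Q'_{t_0}>0$ exactly when $\bigcap_{i=1}^m C_i\ker Q_{t_0}=0$. Because the $g_i$ may realize arbitrary elements of $Sp(2n)$ at $x_0$, one can choose them inductively so that these $m$ subspaces are in general position, and then the intersection is trivial as soon as $m\bigl(2n-\dim\ker Q_{t_0}\bigr)\ge 2n$. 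An elementary computation, using only $0\le k\le 2n-1$, shows that $m=k+1$ satisfies this inequality provided $\dim\ker Q_{t_0}\le k$.

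So everything reduces to a preliminary step: homotope $\{\phi^H_t\}$ so that $\dim\ker Q_{t_0}\le k=\dim\ker(A_{t_0}-\id)$, i.e. $\operatorname{rank}Q_{t_0}\ge 2n-k$. This is the main obstacle. It should follow from a spreading argument in the spirit of Lemma \ref{firstspread}: the nontriviality of $A_{t_0}$ forces $Q_t$ to carry, cumulatively over $[0,t_0]$, positivity in all the directions of $\operatorname{im}(A_{t_0}-\id)$, a space of dimension $2n-k$, and one transports this rank forward to time $t_0$ by composing with cut-off autonomous Hamiltonians, exactly as positivity is transported in Lemma \ref{firstspread} --- choosing their size large enough, depending on the third-order part of $H_t$, that $x_0$ stays a genuine maximum on $V$ throughout. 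Since this is a homotopy through loops keeping $x_0$ a fixed maximum, it does not change the homotopy class, so afterwards the composition above with $m=k+1$ yields the required $\{\phi^K_t\}$.
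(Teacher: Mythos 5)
Your core construction is the right one and coincides with the paper's: compose conjugated copies $g_i\phi^H_tg_i^{-1}$ of the path, observe that the quadratic part of the new Hamiltonian at $x_0$ becomes a sum $\sum_i C_i^TQ_{t_0}C_i$ of congruent copies of $Q_{t_0}$ with $C_i\in Sp(2n)$ adjustable through the $Dg_i(x_0)$, and choose the conjugators so that the translated kernels intersect trivially. (The paper does exactly this with $m=2$ per step, using an explicit symplectic rotation $B$ that interchanges a kernel vector $w$ of $Q_{t_0}$ with an eigenvector $v$ of positive eigenvalue, killing at least one kernel direction per iteration; your general-position formulation is an acceptable substitute, though you should check that the later $C_i$ can really be varied freely, since each is a product involving the earlier $Dg_j(x_0)$ and $A_{t_0}$.)

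The genuine gap is the ``preliminary step'' you yourself flag as the main obstacle: arranging $\dim\ker Q_{t_0}\le\dim\ker(A_{t_0}-\id)$ at the given time $t_0$. As written this is asserted, not proved, and the proposed mechanism does not work: the bump functions of Lemma \ref{firstspread} have full-rank definite quadratic part near the origin, so the compensating negative insertion can only be absorbed at a time where $Q_t$ is already positive definite with eigenvalues bounded below; there is no evident way to transport ``partial rank'' by that device, and indeed $Q_{t_0}$ may vanish identically at a time where $A_{t_0}\ne\id$ (take $Q_t$ supported in $[0,1/2]$ and $t_0=3/4$), in which case every summand $C_i^TQ_{t_0}C_i$ is zero and no choice of conjugators helps. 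The paper sidesteps the issue entirely: it does not keep the $t_0$ of the hypothesis but simply picks some $t_0$ with $Q_{t_0}\ne 0$, which exists because $A_0=\id$, $\dot A_t=JQ_tA_t$, and $A_t$ is nonconstant; nondegeneracy at such a $t_0$ is all that is needed to feed into Proposition \ref{secondspread}. With that one change your argument closes, yielding $m\le 1+\dim\ker Q_{t_0}$ --- which is what both your counting and the paper's construction actually deliver; the precise bound on $m$ in the statement plays no role in the applications.
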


\begin{proof}
Throughout, for convenience of notation, we explicitly work in $\R^{2n}$ and the linearization of $\phi^H_t$.  We refer to the linearization as the path $A_t \in Sp(2n, \R)$, and note that it satisfies $A_0 = A_1 = Id$ and $\frac{d}{dt}A_t(x) = JQ_t(x)A_t(x)$ with $Q_t \ge 0$ and symmetric.  Let $t_0$ be such that $Q_{t_0} \ne 0$.

Identify $\R^{2n} = E_0 \oplus E_1$ with $E_0 = ker(Q_{t_0})$ and $E_1$ the sum of eigenspaces of $Q_{t_0}$ with nonzero eigenvalues.  We first consider the case when $J(E_0) = E_0$.  Choose $v \in E_1$ to be an eigenvector for $Q_{t_0}$, and let $0 \ne w \in E_0$.  Split $\R^{2n} = \R^4 \oplus \R^{2n-4}$ with $\R^4$ spanned by $\{v, w, Jv, Jw\}$ and $\R^{2n-4} = (\R^4)^{\omega}$ its symplectic orthogonal. Define $B \in Sp(2n)$ by
\begin{eqnarray*}
BA_{t_0}^{-1}w = v, & & BA_{t_0}^{-1}Jw = Jv\\
BA_{t_0}^{-1}v  =  -w, & & BA_{t_0}^{-1}Jv = -Jw, \\
BA_{t_0}^{-1}|_{\R^{2n-4}} = Id. & &
\end{eqnarray*}

\noindent
Let $B_s \in Sp(\R^{2n})$ for $s \in [0,1]$ satisfy $B_0 = Id$ and $B_1 = B$, and let $f_s : \R^{2n} \rightarrow \R^{2n}$ be a family of symplectomorphisms fixing the origin and supported in a small neighborhood of it satisfying $Df_s(0) = B_s$, where $Df_s(0) : \R^{2n} \rightarrow \R^{2n}$ is the derivative at the origin.  We may assume that $x_0$ is a maximum of $H_t$ throughout the support of the family $f_s$.  We now wish to consider the family of paths given by:
\begin{eqnarray*}
\phi^K_{s, t} = \phi^H_t (f_s^{-1} \phi^H_t f_s)
\end{eqnarray*}

\noindent
which will provide a homotopy from $\big(\phi_t^H\big)^2$ to $\phi_{1,t}^K$.  For each $s$ this will remain a Hamiltonian flow, and will be generated by:
\begin{eqnarray*}
K_{s,t} = H_t + H_t(f_s \circ (\phi_t^H)^{-1}).
\end{eqnarray*}

By our choice of $f$, $x_0$ will remain a constant maximum of $\phi_{s,t}^K$ for all values of $s$.  To  determine the degeneracy of our maximum, we simply differentiate:
\begin{eqnarray*}
\frac{d}{dt} D\phi_{s,t}^H(0) & = & \frac{d}{dt} A_t B_s^{-1} A_t B_s \\
                                                 & = & \dot{A}_tB_s^{-1} A_t B_s + A_t B_s^{-1} \dot{A}_t B_s \\
                                                 & = & J Q_t (A_t B_s^{-1}A_tB_s) + A_tB_s^{-1}JQ_tA_t B_s \\
                                                 & = & J\Big( Q_t + (B_sA_t^{-1})^TQ_t (B_sA^{-1}_t) \Big) (A_tB_s^{-1}A_tB_s).
 \end{eqnarray*} 

Call $\Gamma_{s,t} =  (B_1A_{t}^{-1})^TQ_{t} (B_1A^{-1}_{t})$.  Since both $Q_t$ and $\Gamma_{s,t}$ remain symmetric and nonnegative for all values of $s$ and $t$, so is their sum.  Furthermore, if $Q_t + \Gamma_{s,t}$ has any kernel, it must be contained in the kernel of $Q_t$.  Thus we need only check that it is nondegenerate in the $v, w$ plane when $s=1$ and $t=t_0$.  Abusing notation, call $\Gamma_{1,t_0} = \Gamma$.  We compute:

\begin{eqnarray}\label{expand}
\langle v+aw, (Q_{t_0} + \Gamma)(v + aw)\rangle & = & \langle v, Q_{t_0}v\rangle + \langle v, \Gamma v\rangle +a\langle v, \Gamma w\rangle \\
\notag                                                                                               &    & +a\langle w, \Gamma v\rangle + a^2\langle w, \Gamma w\rangle \\
\notag & = & (1+a^2)\langle v, Q_{t_0}v\rangle \\
\notag & > & 0.
\end{eqnarray}

To see that we created no new kernel, let $ u \in \R^{2n}$.  Then
\begin{eqnarray*}
\langle u, (Q_{t_0} + \Gamma)u\rangle = \langle u, Q_{t_0}u\rangle + \langle u, \Gamma u\rangle
\end{eqnarray*}

\noindent
with both matrices being nonnegative.  Thus the sum can only be zero if $\langle u, Q_{t_0}u\rangle = 0$.

In the case when $J$ does not preserve $E_0$, we may choose $w \in E_0$ so that $\langle Jw, Q_{t_0}Jw\rangle > 0$.  In this case, setting $v = Jw$ we define
\begin{eqnarray*}
BA_{t_0}^{-1}w = v, & & BA_{t_0}^{-1}v  =  -w,\\
BA_{t_0}^{-1}|_{\R^{2n-2}} = Id. & &
\end{eqnarray*}

As \eqref{expand} remains the same, the remainder of the proof is identical to the previous case. 
\end{proof}

\begin{cor}
Let $x_0 \in M$ be a fixed local maximum on $U$ of a family of Hamiltonians $H_t$, such that $D\phi_t^H(x) \ne Id$ for some $t_0$.  Then we may construct a new path $\phi^F_t$ which is positive for all $t \in [0,1]$ and such that if $H_t(x_0) \ge H_t(y)$ for some $y \in U$, then $F_t(x_0) \ge F_t(y)$.  Furthermore, $\phi_t^F$ is an iterate of $\phi_t^H$.
\end{cor}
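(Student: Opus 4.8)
The plan is to chain Propositions~\ref{thirdspread} and~\ref{secondspread}, which have already carried out all of the analytic work; the corollary amounts to checking that their hypotheses line up and that the property ``$x_0$ is a maximum on $U$'' survives two successive local modifications. First I would note that the hypothesis $D\phi^H_t(x_0)\ne Id$ for some $t_0$ forces $Q_t\ne 0$ for some value of $t$: the linearized flow $A_t$ at $x_0$ solves $\dot A_t=JQ_tA_t$ with $A_0=Id$, so $Q_t\equiv 0$ would give $A_t\equiv Id$ by uniqueness for linear ODEs, contradicting $D\phi^H_{t_0}(x_0)\ne Id$. After relabelling we may therefore assume $Q_{t_0}\ne 0$. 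All of the perturbations below are supported in an arbitrarily small neighborhood of $x_0$, which we keep inside $U$; on the complement of that support the generating Hamiltonian is unchanged, so $x_0$ stays a maximum on $U$ provided the perturbation does not create a larger value inside the support, which is exactly what Propositions~\ref{firstspread}, \ref{secondspread}, \ref{thirdspread} arrange.

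Next I would reduce to the case where $Q_{t_0}>0$ for some $t_0$. If $Q_{t_0}\ne 0$ but $Q_{t_0}\ngtr 0$, apply Proposition~\ref{thirdspread}: it produces a path $\{\phi^K_t\}$, homotopic to the iterate $\{(\phi^H_t)^m\}$ with $m\le 1+\dim\ker(D\phi^H_{t_0}-Id)$, whose generating Hamiltonian $K_t$ is nondegenerate at $x_0$ for $t=t_0$ --- that is, whose quadratic part $Q'_{t_0}$ is strictly positive definite --- and for which $x_0$ is still a maximum of $K_t$ on $U$. If $Q_{t_0}$ was already positive definite, put $\phi^K_t=\phi^H_t$ and $m=1$. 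In either case we now have a path $\{\phi^K_t\}$, based at $Id$, with $x_0$ a maximum of $K_t$ on $U$, whose quadratic part is strictly positive definite at some time $t_0$.

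Now apply Proposition~\ref{secondspread} to $\{\phi^K_t\}$: it is homotoped, through semipositive paths with fixed endpoints, to a path $\{\phi^F_t\}$ whose quadratic part is positive for all $t\in[0,1]$, i.e. a positive path, with $x_0$ still a maximum of $F_t$ on $U$. In particular $F_t(x_0)\ge F_t(y)$ for every $y\in U$, so whenever $H_t(x_0)\ge H_t(y)$ we also have $F_t(x_0)\ge F_t(y)$. Finally $\phi^F_t$ is homotopic to $\phi^K_t$, which is homotopic to $(\phi^H_t)^m$, so $\phi^F_t$ is, up to homotopy, an iterate of $\phi^H_t$.

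The only step with any content beyond quoting the two propositions is the ODE observation in the first paragraph that turns $D\phi^H_{t_0}(x_0)\ne Id$ into ``$Q_t\ne 0$ for some $t$'', which is needed so that Proposition~\ref{thirdspread} applies with a genuinely nonzero $Q_{t_0}$, together with the routine bookkeeping that keeps all the local supports inside $U$. Neither is a real obstacle; the substantive work was already done in the preceding results.
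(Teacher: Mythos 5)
Your proof is correct and follows the route the paper intends: the corollary is stated without its own proof precisely because it is the concatenation of Propositions~\ref{thirdspread} and~\ref{secondspread}, which is exactly what you carry out. Your ODE observation that $D\phi^H_{t_0}(x_0)\ne Id$ forces $Q_t\ne 0$ for some $t$ usefully reconciles the stated hypothesis of Proposition~\ref{thirdspread} with what its proof actually uses, and your reading of ``iterate'' as ``homotopic to an iterate'' is the one consistent with the conclusion of Proposition~\ref{thirdspread}.
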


Assuming Corollary \ref{totallydegenerateintro}, combining the above results finishes the proof of Theorem \ref{fourdimdeg}.

\begin{remark}
One may note that there is a slight error in \cite{mc}.  Proposition 1.4 of that paper states that if $\dim M \le 4$, and the loop $\{\phi_t^H\}$ has a nondegenerate fixed global maximum, then it can be homotoped so that it is an effective $S^1$ action near the maximum.  This is actually only true if $\dim M = 4$.  If $\dim M = 2$, the existence of such a loop forces the manifold to be $S^2$, and thus there certainly exists an effective $S^1$ action.  However a two time rotation is not homotopic to an effective action. 
\end{remark}

\end{subsection}
\end{section}


\begin{section}{Hamiltonian Fibrations}\label{hambundles}

This section contains proofs of Theorem \ref{totallydegnonmax} and Corollary \ref{totallydegenerateintro}.  We begin with a discussion of the Hamiltonian fibration that will be used.  One may note that the autonomous case is much more straightforward, even if not restricted to global circle actions.  It is a standard result that given an autonomous path of Hamiltonian diffeomorphisms, around any totally degenerate fixed point, there is an entire neighborhood containing no nontrivial periodic orbits (see e.g. Lemma 12.27, \cite{ms1}), and thus cannot be a loop.

\begin{subsection}{Hamiltonian Bundles Over $S^2$}
Given $(M, \omega)$ and any loop of Hamiltonian diffeomorphisms, $\{\phi_t^H\}$, there is an associated Hamiltonian fibration $P \rightarrow S^2$ with fiber symplectomorphic to $(M, \omega)$.  Throughout we use the standard almost complex structure on $S^2$, which we call $j$.  Begin with two copies of $M \times D_{\pm}$, where $D_{\pm}$ denotes two different copies of the unit disk with opposite orientations.  Define the equivalence relation:
\begin{eqnarray}\label{bundle}
P: = M\times D_+\cup M\times D_-/\!\sim, \;\;(\phi^H_t(x),e^{2\pi it})_+=
(x,e^{2\pi it})_-. 
\end{eqnarray}

Since $\phi_0^H(x) = \phi_1^H(x) = x, \forall x \in M$, the two copies of $D$ glue together along their boundaries to give a copy of $S^2$, and $P \rightarrow S^2$ will be a fibration with fiber diffeomorphic to $M$.  Denote the projection map by $\pi : P \rightarrow S^2$.  The vertical tangent bundle here is given by $T^{Vert}P = ker(D\pi) \subset TP$.  Because the fibers are symplectic, they have Chern classes and we denote the vertical first Chern class by $c_1^{Vert}$. 

Define a symplectic form $\Om$ on $P$ by
\begin{eqnarray}\label{form}
& \Om_-: = \om + \delta d(r^2)\wedge dt, \mbox{ on } M \times D_- & \\
\notag & \Om_+: = \om + \Big(\kappa(r^2,t)d(r^2) - d(\rho(r^2)H_t)\Big)\wedge dt \mbox{ on } M \times D_+ &
\end{eqnarray}

\noindent
where we have used normalized polar coordinates $(r, t)$ on $D$ with $t: = \theta / 2\pi$.  Here $\rho(r^2)$ is a nondecreasing function that equals 0 near 0 and 1 near 1, and $\delta > 0$ is a small constant.  As long as $\kappa(r^2, t) = \delta$ near $r=1$, these two forms will fit together to give a closed form on $P$.  To be symplectic, $\Om$ must be nondegenerate, but this can be seen to happen iff $\kappa(r^2, t) - \rho '(r^2)H_t(x) > 0, \forall (r,t) \in D_{\pm}$ and $x \in M$.  

$\Om$ restricted to $T^{Vert}P$ is nondegenerate.  Thus $\Om$ gives a connection $2$-form on $P$, and we have a well defined horizontal distribution, which we will denote by $T^{Hor}P$.  To be more precise:
\begin{eqnarray}\label{hor}
T^{Hor}_pP = \{v \in T_pP | \Om(v, w) = 0, \forall w \in kerD\pi(p) \} 
\end{eqnarray}

\begin{defn}\label{compatible} (\cite{ms2} $\S 8.2$) An almost complex structure, $\Tilde{J} : TP \rightarrow TP$ will be called compatible with the fibration if the following conditions are met:
\begin{enumerate}
\item $\pi : P \rightarrow S^2$ is holomorphic
\item $\Tilde{J} |_{T^{Ver}_zP}$ is tamed by $\om, \forall z \in S^2$
\item $\Tilde{J}(T^{Hor}P) \subset T^{Hor}P$.
\end{enumerate}
\end{defn}

\noindent
Note here that varying $\kappa$ in \eqref{form} does not affect the horizontal distribution defined in \eqref{hor}.

By our choice of $\Omega$, $T^{Hor}(M\times D_-)$ is spanned by $\p_r$ and $\p_t$, and  $T^{Hor}(M\times D_+)$ is spanned by the vectors $\p_r$ and $\p_t - X_t^H$ at each point, and conditions (1) and (3) completely determine $\Tilde{J}$ on $T^{Hor}P$.  Also, because $\Tilde{J}$ is tamed by $\Omega$, the bilinear form 
\begin{eqnarray*}
g_{\Tilde{J}}(v, w) = \frac{1}{2}\big(\Omega( v, \Tilde{J} w) + \Omega (w, \Tilde{J}v)\big)
\end{eqnarray*}

\noindent
defines a Riemannian metric on $P$, with associated Levi-Civita connection, $\nabla$.  To obtain a connection which will preserve $\Tilde{J}$ we use (\cite{ms2} $\S$ 3.1)
\begin{eqnarray} \label{connection}
\Tilde{\nabla}_vX = \nabla_vX - \frac{1}{2}\Tilde{J}(\nabla_v\Tilde{J})X.
\end{eqnarray}

This bundle contains lots of sections.  To see this explicitly, choose any $x \in M$.  As $\{\phi^H_t\}$ is a loop, every point gives rise to a contractible 1-periodic orbit.   A contraction of the orbit $\{\phi^H_t(x)\}$ is a map from the unit disk $f: D \rightarrow M$ with $f(e^{2\pi i t}) = \phi_t^H(x)$.  Thus an explicit formula for a section $s:S^2 \rightarrow P$ would be
\begin{eqnarray}\label{section}
& s(r,t) = x \times (r,t) \mbox{, on } D_- & \\
\notag & s(r,t) = f(r,t) \times (r,t) \mbox{, on } D_+.
\end{eqnarray}

In the event that $x_0$ is fixed by $\phi_t^H$ for all time, we may choose $f(r,t)$ to be constant.  We will denote such a constant section by $s_0$.  These sections have particularly nice properties as they are holomorphic with respect to compatible almost complex structures on $P$.  Another important property holds when $x_0$ is a fixed maximum of $H_t$.  We now argue as in McDuff (\cite{mc}, Proposition 2.11) as well as McDuff and Tolman (\cite{mt}, Lemma 3.1).

\begin{lemma}\label{smallest}
Suppose that $x_0 \in M$ is a fixed maximum on the open set $U \subset M$ of a loop of Hamiltonian diffeomorphisms for all time, and consider the constant section $s_0:(r,t) \mapsto x_0 \times (r,t)$.  Then, given any $\Tilde{J}$ compatible with the fibration, the only nearby holomorphic sections in class $[s_0]$ are constant ones, and are parameterized by elements of a component of the fixed local maximum set, $D_{max}$ for $H_t$.
\end{lemma}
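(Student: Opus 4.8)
The plan is to analyze the linearized $\ov\p$-operator at the section $s_0$ and show its kernel consists only of tangent vectors to constant sections. First I would set up coordinates: work in the trivialization $M \times D_+$ near the fiber over a point of $D_+$, where $T^{Hor}(M\times D_+)$ is spanned by $\p_r$ and $\p_t - X_t^H$, and similarly $\p_r,\p_t$ on $D_-$. A holomorphic section $s$ near $s_0$ is, over $D_+$, of the form $(r,t) \mapsto (u(r,t), (r,t))$ for $u \colon D_+ \to M$ with $u$ close to the constant $x_0$; the condition that $s$ be $\Tilde J$-holomorphic unwinds (as in \cite{mc}, Prop. 2.11 and \cite{mt}, Lemma 3.1) to the statement that $v(r,t) := (\phi_t^H)^{-1}(u(r,t))$, read in a Darboux chart around $x_0$, satisfies a perturbed Cauchy--Riemann equation whose zeroth-order term is governed by the Hessian $Q_t$ of $H_t$ at $x_0$ (recall the local form $H_t = const - \tfrac12\langle x, Q_tx\rangle + O(\|x\|^3)$). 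Because $x_0$ is a fixed maximum on $U$, we have $Q_t \ge 0$ for all $t$, and this sign is exactly what makes the argument work.

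The key step is an energy/maximum-principle estimate. I would consider the function $h(r,t) = \tfrac12\|v(r,t)\|^2$ (squared distance to $x_0$ in the Darboux chart) on the disk, or equivalently compare the symplectic area of the nearby section to that of $s_0$. The point is that $s_0$ has the \emph{smallest} $\Om$-area among sections in the class $[s_0]$ that lie in the region where $x_0$ is maximal: writing $\Om$ as in \eqref{form}, the difference of areas $\int s^*\Om - \int s_0^*\Om$ picks up, from the $-d(\rho(r^2)H_t)\wedge dt$ term, a contribution $\int \rho'(r^2)\big(H_t(x_0) - H_t(u(r,t))\big)\, d(r^2)\wedge dt \ge 0$ since $H_t(x_0) \ge H_t(u(r,t))$, plus a manifestly nonnegative term $\int u^*\om$ coming from the energy of the map into the fiber, plus boundary matching terms that vanish. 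On the other hand, for holomorphic sections in a fixed homology class the $\Om$-area is a homological invariant, hence equal for $s$ and $s_0$. Therefore both nonnegative contributions must vanish: $\int u^*\om = 0$ forces $u$ (hence $s$) to be vertically constant in $t$-direction appropriately, and vanishing of $\rho'(r^2)(H_t(x_0)-H_t(u))$ on the support of $\rho'$ forces $u(r,t)$ to stay in the maximum set. Combined with holomorphicity one concludes $s$ is a constant section $x \times (r,t)$ with $H_t(x) = H_t(x_0)$ for all $t$, i.e.\ $x \in D_{max}$; that the relevant $x$ lie in a single component of $D_{max}$ follows because ``nearby'' sections correspond to nearby points $x$, and $D_{max}$ is open by Theorem \ref{totallydegnonmax}, so a connected neighborhood of $x_0$ in $D_{max}$ parameterizes exactly the nearby constant holomorphic sections.

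The main obstacle I expect is making the area-comparison rigorous with all the connection and almost-complex-structure bookkeeping: one must check that for a $\Tilde J$-holomorphic section the fiber component genuinely contributes $\int u^*\om \ge 0$ with equality iff the section is horizontal over $D_+$, and that the horizontal part over $D_+$ being trivial (together with the $D_-$ side being literally constant and the gluing condition) forces global constancy — this uses that $\p_t - X_t^H$ spans the horizontal direction so a horizontal section over $D_+$ is exactly an orbit $t \mapsto \phi_t^H(x)$, which glues to the constant $x$ on $D_-$. A secondary point is handling the $O(\|x\|^3)$ error in the local Hamiltonian: one restricts to a small enough neighborhood (``nearby'' sections) that the quadratic/positivity behavior dominates, which is fine since the statement is local. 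I would also remark that this is precisely the argument structure of \cite{mc}, Proposition 2.11 and \cite{mt}, Lemma 3.1, adapted to the degenerate case by using only $Q_t \ge 0$ (rather than $Q_t > 0$), so no transversality or index computation beyond what those references provide is needed here.
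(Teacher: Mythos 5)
Your proposal is correct and follows essentially the same route as the paper: an area comparison showing that $\int s^*\Om$ decomposes into a vertical term that is nonnegative by tameness (vanishing iff the section is horizontal) and a horizontal term minimized exactly when the section stays where $H_t$ attains its maximum over $U$, so that equality of areas within the class $[s_0]$ forces the section to be constant through a point of $D_{max}$. One caution: your final appeal to Theorem \ref{totallydegnonmax} for openness of $D_{max}$ is circular, since that theorem is deduced from this lemma; the parameterization statement should instead be read off directly from the equality case of the area estimate, as the paper does.
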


\begin{proof}
We use the symplectic form given by \eqref{form}.  At a point in the image of our section $u:S^2 \rightarrow P$, split $TP = T^{Vert}P \oplus T^{Hor}P$, and write elements of $T_{u(r,t)}P$ as $v +h$.  If $u$ is sufficiently close to $s_0$, then it must only pass through our neighborhood $U$. We compute:

\begin{eqnarray*}
\Omega(v+h, \Tilde{J}(v+h)) & = & \omega(v,\Tilde{J}v) + \Omega(h,\Tilde{J}h) \ge  \Omega(h, \Tilde{J}h) \\
                                       & \ge & 2r\big(\kappa(r^2, t) - \rho'(r^2) \max_{x\in U}H_t(x)\big)dr \wedge dt(h, \Tilde{J}h).
\end{eqnarray*} 

The first inequality is an equality only if the curve is horizontal, and the second is an equality only if the section is contained in the same component of $D_{max} \times S^2$ as $x_0$.  Since another curve representing the same class as $[s_0]$ must have the same symplectic area, it must be constant.
\end{proof}

Note that a slight adjustment to the previous argument gives the following.

\begin{lemma}\label{smallest2}
Suppose that $x_0 \in M$ is a fixed global maximum on $M$ of a loop of Hamiltonian diffeomorphisms for all time, and consider the constant section $s_0:(r,t) \mapsto x_0 \times (r,t)$.  Then, given any $\Tilde{J}$ compatible with the fibration, the only holomorphic sections in class $[s_0]$ are constant ones, and are parameterized by elements of the fixed global maximum set, $F_{max}$ for $H_t$.
\end{lemma}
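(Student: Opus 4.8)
The plan is to follow the proof of Lemma \ref{smallest} essentially verbatim, the only change being that the local bound $\max_{x\in U}H_t$ is replaced by the global one $\max_{x\in M}H_t=H_t(x_0)$; this global comparison is valid no matter where the section goes in $M$, which is exactly what lets one drop the ``nearby'' hypothesis. Let $u:S^2\to P$ be any $\Tilde{J}$-holomorphic section in the class $[s_0]$. At a point $u(r,t)$, pick a $j$-oriented orthonormal pair $e_1,e_2=je_1$ on $S^2$ and split $du(e_i)=v_i+h_i$ along $TP=T^{Vert}P\oplus T^{Hor}P$; since $u$ is a section, $h_i$ is the horizontal lift of $e_i$, and since $\Tilde{J}$ is compatible with the fibration, $v_2=\Tilde{J}v_1$ and $h_2=\Tilde{J}h_1$. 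Using that the vertical and horizontal subspaces are $\Om$-orthogonal,
\[
u^*\Om(e_1,e_2)=\om(v_1,\Tilde{J}v_1)+\Om(h_1,\Tilde{J}h_1)\ \ge\ \Om(h_1,\Tilde{J}h_1),
\]
and the curvature term is, by the computation already carried out in Lemma \ref{smallest}, at least $2r\big(\ka(r^2,t)-\rho'(r^2)H_t(x_0)\big)\,dr\wedge dt(e_1,e_2)$ on $M\times D_+$ and equal to $2\de r\,dr\wedge dt(e_1,e_2)$ on $M\times D_-$, since $H_t$ evaluated at the fibre coordinate of $u$ is at most $\max_M H_t=H_t(x_0)$. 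The resulting lower bound is exactly $s_0^*\Om$, so $u^*\Om\ge s_0^*\Om$ pointwise on $S^2$, with equality at $(r,t)$ precisely when $v_1=0$ and the fibre coordinate of $u(r,t)$ lies on $\{H_t=\max_M H_t\}$.

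Next, since $[u]=[s_0]$ and $\Om$ is closed, $\int_{S^2}u^*\Om=\langle[\Om],[s_0]\rangle=\int_{S^2}s_0^*\Om$; combined with the pointwise inequality $u^*\Om-s_0^*\Om\ge 0$ this forces $u^*\Om=s_0^*\Om$ everywhere, hence equality in the estimate at every point. Thus $v_1\equiv 0$, so $u$ is everywhere tangent to the horizontal distribution, and its fibre coordinate lies on $\{H_t=\max_M H_t\}$ for all $(r,t)$. It then remains to identify such horizontal sections. Over $M\times D_-$ the horizontal distribution is the product connection, so $u$ is a constant section $\{q\}\times D_-$. Over $M\times D_+$, writing $u=(g(r,t),r,t)$, horizontality forces $\p_r g\equiv 0$; since the fibre over the centre of $D_+$ is a single point, $g$ is constant there, say $g\equiv p$, and then $\p_t g\equiv 0$ forces $X_t^H(p)\equiv 0$, so $p$ is fixed by the flow. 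The clutching relation $(\phi_t^H(y),e^{2\pi it})_+\sim(y,e^{2\pi it})_-$ matches the two halves only if $p=\phi_t^H(q)$ for all $t$, and evaluating at $t=0$ gives $p=q$; hence $u$ is the constant section at $p$, and the equality condition on the level set, now holding for all $t$, says $H_t(p)=\max_M H_t$ for all $t$, i.e. $p\in F_{max}$. Conversely, each constant section at a point of $F_{max}$ is horizontal, hence $\Tilde{J}$-holomorphic, and for points in the component of $x_0$ it is homologous to $s_0$ — a path of fixed points from $x_0$ to $p$ sweeps out a copy of $[0,1]\times S^2$ inside $P$ bounded by the two constant sections — which gives the stated parameterisation.

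The energy computation in the first paragraph is a direct transcription of the one done for Lemma \ref{smallest}, so I do not expect it to be the obstacle. The step that needs the most care is the passage from ``horizontal everywhere'' to ``constant section at a genuine fixed point of the loop'': it is precisely the degeneration of the fibration over the two disk centres, together with the clutching map, that pins this down. One should also be a little careful about homological bookkeeping in the converse, stating it only for those components of $F_{max}$ whose constant sections actually realise the class $[s_0]$ (these include, at least, the component of $x_0$).
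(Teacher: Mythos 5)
Your proposal is correct and follows essentially the same route as the paper: the paper obtains Lemma \ref{smallest2} as ``a slight adjustment'' of the area estimate in Lemma \ref{smallest}, replacing $\max_{x\in U}H_t$ by the global maximum $H_t(x_0)$ so that the pointwise inequality $u^*\Om\ge s_0^*\Om$ holds for \emph{every} section in class $[s_0]$, and then equality of total areas forces horizontality and containment in $F_{max}$. Your extra care in passing from ``horizontal everywhere'' to ``constant section at a fixed point'' via the clutching relation is a fuller account of a step the paper leaves implicit, not a different argument.
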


\end{subsection}


\begin{subsection}{Totally Degenerate Maxima}
Let $\phi_t^H$ be a loop in $Ham(M, \omega)$ based at $Id$, with $D_{max}$ the fixed local maxima set. We show that this set must be open.

The compatibility conditions of Definition \ref{compatible} do not determine $\Tilde{J}$ on $T^{Vert}P$, so we now construct one explicitly.  In our case, we wish the almost complex structure we construct to be regular for a constant section through $D_{max}$.

First, choose a Darboux chart around $x_0 \in D_{max}$, call it $U$, and identify it with a neighborhood of $0 \in \R^{2n}$.  Using the standard $\{x_i, y_i\}$ coordinates on $\R^{2n}$ and the standard $J$, choose an almost complex structure on $M$ which is the pullback of $J$ on $U$, and refer to it as $J_0$.  

Take $\Tilde{J_0}^{Vert}$ on $T^{Vert}(M \times D_-)$ to be $J_0$.  This forces $\Tilde{J}^{Vert} = (\phi^H_t)_*J_0$ on $M \times \p D_+$, and we must extend this to the rest of $T^{Vert}(M \times D_+)$.  In our coordinates on $U$,  $(\phi^H_t)_*J_0$ will be given by conjugation by $D\phi_t^H(x)$, so that at a point $x$, we have
\begin{eqnarray*}
(\phi^H_t)_*J_0 = (D\phi_t^H(x))^{-1} \circ J_0 \circ D\phi_t^H(x)
\end{eqnarray*}

\noindent 
where we have realized $D\phi_t^H(x)$ as a loop of maps $D\phi_t^H: U \rightarrow Sp(2n)$ based at the constant map $U \mapsto Id$.  Since $D\phi_t^H(0) = Id$ for all time, we may also assume our initial neighborhood $U$ is small enough that there is a loop of maps $Y_t : U \rightarrow \textbf{sp}(2n)$ based at the constant map $U \mapsto 0$ satisfying
\begin{eqnarray*}
\exp(Y_t(x)) = D\phi_t^H(x)
\end{eqnarray*}

\noindent
where $\exp$ is the standard exponential map from $\textbf {sp}(2n) \rightarrow Sp(2n)$.
Letting $\beta :[0,1] \rightarrow [0,1]$ be a smooth, nondecreasing function which is 0 near 0 and 1 near 1, we may consider the family of maps $Y_{r,t}: U \rightarrow \textbf{sp}(2n)$ given by $Y_{r,t}(x) = \beta(r)Y_t(x)$.  By our choice of $\beta$, $Y_{r,t}(x) = 0$ for $r$ close to $0$ and we may now consider this as a family of maps smoothly parameterized by $D_+$.  We now extend our almost complex structure to all of $U \times D_+$ by the formula:
\begin{eqnarray}\label{jvert}
\Tilde{J}^{Vert}(x \times (r,t)) = \exp(Y_{r,t}(x))^{-1} \circ J_0 \circ \exp(Y_{r,t}(x)),
\end{eqnarray}

\noindent
where $x \times (r,t) \in U \times D_+$.

Finally extend $\Tilde{J}^{Vert}$ to the rest of $T^{vert}(M \times D_+)$ in a way compatible with the fibration (see \cite{ms2}, \S 8.2), and take $\Tilde{J} = \Tilde{J}^{Vert} \oplus \Tilde{J}^{Hor}$.  We now claim that the $\Tilde{J}$ just constructed is a regular almost complex structure for our constant maximum section.

\begin{lemma}\label{vanishing}
Let $\xi \in \Omega^0(S^2, s_0^*(TP))$ be any vector field along $s_0$.  Then $\nabla_{\xi}\Tilde{J} = 0$.
\end{lemma}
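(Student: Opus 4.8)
The plan is to evaluate $\Tilde\nabla\Tilde J$ — or rather the ordinary Levi-Civita covariant derivative $\nabla\Tilde J$ — at points of the image of $s_0$, and show it annihilates every tangent vector of $P$ there (restricting a derivative along $s_0$ only requires knowing $\Tilde J$ to first order along $s_0$, but it is cleanest to show $\nabla_v\Tilde J=0$ for all $v\in T_pP$ with $p\in s_0(S^2)$). First I would observe that the image of $s_0$ is the curve $\{x_0\}\times S^2$, which lies entirely inside the Darboux chart region $U\times D_\pm$ where we have the explicit formula \eqref{jvert} for $\Tilde J^{Vert}$ together with the fibre-product metric coming from $\om\oplus$(base form). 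The key structural point is that along $s_0$ the data defining $\Tilde J$ is as degenerate as possible: since $D\phi_t^H(0)=Id$ for all $t$, we have $Y_t(0)=0$, hence $Y_{r,t}(0)=\beta(r)Y_t(0)=0$ for every $(r,t)$, so that $\Tilde J^{Vert}(x_0\times(r,t))=J_0$ is literally constant along $s_0$; likewise the horizontal distribution along $s_0$ is the standard one (spanned by $\p_r,\p_t$ on $D_-$ and $\p_r,\p_t-X_t^H$ on $D_+$, and $X_t^H(x_0)=0$ since $x_0$ is a fixed point), so $\Tilde J^{Hor}$ is also the pullback of the standard complex structure $j$ on $S^2$ there. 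In short, $\Tilde J$ restricted to a neighborhood of $s_0(S^2)$ inside the fibre direction and along $s_0$ in the base direction is the \emph{product} complex structure $J_0\times j$ with the product metric.

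Next I would compute. Write $\nabla_v\Tilde J$ at $p=s_0(r,t)$ by splitting $v=v^{Vert}+v^{Hor}$ and using that $\nabla$ is the Levi-Civita connection of $g_{\Tilde J}$. For $v^{Vert}$: moving in a vertical direction within the fibre $\{(r,t)\}\times M$ near $x_0$, the vertical almost complex structure varies — but its first-order variation is governed by $D_v\bigl(\exp(Y_{r,t})^{-1}J_0\exp(Y_{r,t})\bigr)$, which involves $D_v Y_{r,t}(x_0)=\beta(r)\,D_vY_t(x_0)$, and this need not vanish; however this is exactly the situation in which one normally invokes that $\Tilde J^{Vert}$ is \emph{compatible with $\om$} so that $(\nabla^{fibre}\Tilde J^{Vert})$ is $\Tilde J$-antilinear, and more to the point one uses the precise Kähler-type identity: for an $\om$-compatible $J_0$ (which is integrable and Kähler-standard on $U$), the Chern connection agrees with Levi-Civita and $\nabla J_0=0$ on $U$; since along $s_0$ the relevant structure is literally $J_0$ (the conjugating factor is trivial), the mixed derivatives reduce to derivatives of $J_0$ in the flat chart, which vanish. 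For $v^{Hor}$: differentiating in a base direction $\p_r$ or $\p_t-X_t^H$ amounts to differentiating the family $(r,t)\mapsto\Tilde J^{Vert}(x_0\times(r,t))=J_0$ (constant!) plus differentiating $\Tilde J^{Hor}$ which is the constant $j$; the only subtlety is the Christoffel cross-terms $\nabla_{v^{Hor}}(\text{vertical frame})$, but those are controlled because $\Om=\om\oplus(\cdots)$ is a product near $s_0$ so the horizontal and vertical subbundles are parallel along $s_0$ and the metric splits.

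The main obstacle I expect is the bookkeeping of the cross terms: the metric $g_{\Tilde J}$ is genuinely a product only at points of $s_0$, not on a whole neighborhood, so one cannot simply say "$P$ is locally a Riemannian product near $s_0$ hence $\nabla J_0=0$." One must instead show that the deviation from a product — encoded in $\nabla_v\Tilde J$ — is at least one order in the distance from $s_0$, which requires carefully checking that all the defining tensors ($\Om$, the splitting $T^{Vert}\oplus T^{Hor}$, the conjugating loop $\exp(Y_{r,t})$) vanish \emph{to first order} transverse to $s_0$ in the directions that matter. The cleanest route is: (i) fix the product metric $g_0=g_{J_0}\oplus g_{std}$ on $U\times D_\pm$; (ii) check $g_{\Tilde J}-g_0$ vanishes along $s_0$ together with enough of its first derivatives — this uses $Y_t(x_0)=0$ and $X_t^H(x_0)=0$ and $dH_t(x_0)=0$ (the fixed-maximum condition) — so that $\nabla^{g_{\Tilde J}}$ and $\nabla^{g_0}$ agree on vectors based on $s_0$ when applied to sections extending a frame along $s_0$; (iii) conclude $\nabla_\xi\Tilde J=\nabla^{g_0}_\xi(J_0\times j)=0$ since $J_0$ is standard-flat on $U$ and $j$ is standard on $S^2$. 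I would present steps (i)–(iii) and relegate the order-of-vanishing verification to a short explicit computation in the Darboux/polar coordinates, citing \cite{ms2} \S 8.2 and \S 3.1 for the connection formula \eqref{connection} and the construction of $\Tilde J^{Vert}$.
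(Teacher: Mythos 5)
Your overall framework (split into horizontal and vertical parts, observe that $\Tilde{J}$ is the product structure $J_0\times j$ along $s_0$, and reduce everything to an order-of-vanishing statement transverse to $s_0$) is the right shape, but there is a genuine gap at exactly the point you flag and then try to argue around: the vertical derivative of $\Tilde{J}^{Vert}$. Differentiating \eqref{jvert} in a vertical direction $v$ at $x_0$ produces, to leading order, a commutator term governed by $D_vY_{r,t}(x_0)=\beta(r)\,D_vY_t(x_0)$, and this is \emph{not} killed by any K\"ahler or compatibility identity: $J_0$ being standard on the Darboux chart says nothing about the conjugated structure $\exp(Y_{r,t}(x))^{-1}\circ J_0\circ\exp(Y_{r,t}(x))$ at nearby points $x$ of the fibre, which is precisely what $\nabla_v\Tilde{J}^{Vert}$ sees. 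You write that $D_vY_t(x_0)$ ``need not vanish'' --- in fact the entire content of the lemma is that it \emph{does} vanish, and proving that requires the hypothesis that $x_0$ is a \emph{totally degenerate maximum}, which your argument never uses beyond the zeroth-order fact $Y_t(x_0)=0$.

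The missing computation is the following. Total degeneracy plus the maximum condition force $|H_t(x)-H_t(x_0)|\le C\|x\|^4$ (the quadratic part vanishes because $D\phi_t^H(x_0)\equiv Id$, and a maximum with vanishing quadratic part has no cubic part), hence $\|X_t^H(x)\|\le C'\|x\|^3$. Expanding $\phi_t^H(x)=x+\sum_{i\le j}A_{i,j}(t)x_ix_j+O(\|x\|^3)$ and using $\frac{d}{dt}\phi_t^H(x)=X_t^H(\phi_t^H(x))=O(\|x\|^3)$ forces each $A_{i,j}(t)$ to be constant in $t$, hence zero since $\phi_0^H=Id$. Therefore $D\phi_t^H(x)=Id+O(\|x\|^2)$, so $Y_t(x)=O(\|x\|^2)$ and $D_vY_{r,t}(x_0)=0$; only then does $\nabla_v\Tilde{J}^{Vert}=0$ follow in vertical directions. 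This same estimate is also what you would need to make your step (ii) (first-order agreement of $g_{\Tilde{J}}$ with the product metric transverse to $s_0$) go through, so the gap propagates into your metric argument as well. Your treatment of the horizontal part --- the Levi-Civita rotation argument in the frame $\p_r$, $\p_t-X_t^H$ --- matches the paper's and is fine.
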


\begin{proof}
Given a section $\xi$ of $TP$ defined in a neighborhood of $Im(s_0)$, we may write it as $v_{\xi} + h_{\xi}$ where $v_{\xi}$ is a section of $T^{Vert}P$ and $h_{\xi}$ a section of $T^{Hor}P$, both defined in a small neighborhood of $Im(s_0)$.  We consider $\Tilde{J}^{Hor}$ and $\Tilde{J}^{Vert}$ separately.  

It is clear that if $h$ is tangent to $Im(s_0)$, then $\nabla_h \Tilde{J}^{Hor} = 0$.  If $v \in T^{Vert}(M \times D_-)$, one also has $\nabla_v \Tilde{J}^{Hor} = 0$ along $x_0 \times D_-$.  If $x_0 \times (r,t) \in x_0 \times D_+$, then because $\nabla$ is Levi-Civita, we must have $\nabla_v(\p_t - X) = a_{r,t}\p_r$ and $\nabla_v \p_r = -a_{r,t}(\p_t - X)$ with $a_{r,t} \in \R$.  But then using the identity 
\begin{eqnarray*}
(\nabla_v \Tilde{J}^{Hor})(X) = \nabla_v(\Tilde{J}^{Hor}(X)) - \Tilde{J}^{Hor}(\nabla_v(X))
\end{eqnarray*}

\noindent
as well as the Leibniz rule, one can easily see that $\nabla_v\Tilde{J}^{Hor} = 0$ along $x_0 \times D_+$, as well.  Thus we have $\nabla_{\xi}\Tilde{J}^{Hor} = 0$ for any $\xi \in \Omega^0(S^2, u^*(TP))$.

Similar rationale holds to show $\nabla_h\Tilde{J}^{Vert} = 0$ for $h$ tangent to $Im(s_0)$, and $\nabla_v\Tilde{J}^{Vert} = 0$ for $v \in T^{Vert}P$ along $x_0 \times D_-$.  Thus we need only concern ourselves with the value of $\nabla_v\Tilde{J}^{Vert}$ at points in $U \times D_+$ with $U$ a neighborhood of $x_0$.   Locally, we may expand $\phi_t^H$ about $x_0$, so that
\begin{eqnarray}\label{localphi}
\phi^H_t(x) =  x + \sum_{i \le j} A_{i,j}(t)x_ix_j + O(\|x\|^3)
\end{eqnarray}

\noindent
with $A_{i,j}(t)$ a time-dependent loop of vectors in $\R^{2n}$ and the higher order terms also depending on time.  Since $x_0$ is a totally degenerate maximum, we may write $|H_t(x) - H_t(0)| \le C\|x\|^4$ in our neighborhood for some $C$, and thus $\|X_t^H(x)\| \le C'\|x\|^3$ in our neighborhood for some $C'$.  We now use the fact that 
\begin{eqnarray*}
X_t^H(\phi^H_{t_0}(x)) & = & \frac{d}{dt}\phi_t^H(x)|_{t=t_0} \\
                                     & = &  \sum_{i \le j} \Big(\frac{d}{dt}A_{i,j}(t)|_{t=t_0}\Big)x_ix_j + O(\|x\|^3)
\end{eqnarray*}

\noindent
for every $0 \le t_0 \le 1$.  In order for $\|\dfrac{d}{dt}\phi^H_t(x)\|  = \|X_t^H(\phi_t^H(x)) \| \le C'\|x\|^3$, we must have each $A_{i,j}(t)$ a constant function of $t$.  As $\phi_0^H = Id$, $A_{i,j}(t) = 0$ for all $t$.  Thus,
\begin{eqnarray*}
\phi^H_t(x) = x  + O(\|x\|^3) \\
D \phi_t^H(x) = Id + O(\|x\|^2).
\end{eqnarray*}

Since $D\phi_t^H(x) = \exp(Y_t(x))$ we have $\nabla_v\exp(Y_t(x)) = 0$, and it is easy to see that $\nabla_v\exp(Y_{r,t}(x)) = 0$ also.  Finally, since $\exp(Y_{r,t}(x)) = Id$ along our section, we may say 
\begin{eqnarray*}
\nabla_v\Big((\exp(Y_{r,t}(x))^{-1}\circ J_0 \circ \exp(Y_{r,t})\Big) = 0
\end{eqnarray*}

\noindent
along our section, as well.  Thus $\nabla_{\xi}\Tilde{J}^{Vert} = 0$, for any $\xi \in \Omega^0(S^2, u^*(TP))$.
\end{proof}

\begin{prop}\label{regular}
Let $\phi^H_t, t \in [0,1]$ be a loop of Hamiltonian diffeomorphisms based at $Id$.  Let $D_{max}$ be the set of fixed local maxima of $H_t$, and suppose that $D\phi_t^H(x_0) \equiv Id$ for some $x_0 \in D_{max}$ and for all values of $t$.  Let $s_0$ denote the constant section through $x_0$ and let $\Tilde{J}$ be as constructed above.  Then $s_0$ is a regular $\Tilde{J}$ holomorphic map.
\end{prop}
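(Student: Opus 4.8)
The plan is to prove regularity by showing that the real--linear Cauchy--Riemann operator
\[
D_{s_0}:\Om^0(S^2,s_0^*TP)\longrightarrow \Om^{0,1}(S^2,s_0^*TP)
\]
obtained by linearizing $\bar\p_{\Tilde J}$ at $s_0$ is surjective. Recall (see e.g. \cite{ms1}, $\S 3.1$) that with respect to the $\Tilde J$--preserving connection $\Tilde\nabla$ of \eqref{connection} one has, for $\xi\in\Om^0(S^2,s_0^*TP)$,
\[
D_{s_0}\xi \;=\; \bar\p_{\Tilde\nabla}\xi \;-\;\tfrac12\,\Tilde J\,(\nabla_\xi\Tilde J)(\p s_0),
\]
where $\bar\p_{\Tilde\nabla}$ is the $(0,1)$--part of $\Tilde\nabla$ and $\p s_0$ denotes the complex--linear part of $ds_0$. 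By Lemma \ref{vanishing} we have $\nabla_\xi\Tilde J=0$ along $s_0$ for every $\xi$, so the zeroth--order term drops out and $D_{s_0}=\bar\p_{\Tilde\nabla}$ is an honest Dolbeault operator. Hence $s_0^*TP$ inherits a holomorphic structure for which $D_{s_0}$ is the $\bar\p$--operator, and $s_0$ is regular if and only if $H^1(S^2,s_0^*TP)=0$.

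Next I would produce the holomorphic splitting $s_0^*TP=s_0^*T^{Vert}P\oplus s_0^*T^{Hor}P$ and compute the two pieces separately. The operator $D_{s_0}=\bar\p_{\Tilde\nabla}$ respects this decomposition along $s_0$: from the computation in the proof of Lemma \ref{vanishing}, $\Tilde\nabla$ applied in a direction tangent to the (horizontal, flat) section $s_0$ carries the local horizontal frame fields $\p_r,\ \p_t-X_t^H$ back into $T^{Hor}P$, while the compatibility of $g_{\Tilde J}$ and of the fibration structure (Definition \ref{compatible}, \cite{ms2}~$\S 8.2$) forces the same differentiation to carry $T^{Vert}P$ into $T^{Vert}P$; since also $\Tilde J=\Tilde J^{Vert}\oplus\Tilde J^{Hor}$, the Dolbeault operator is block--diagonal. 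For the horizontal block, $T^{Hor}P$ has complex rank $1$ and $D\pi$ identifies $s_0^*T^{Hor}P$ topologically with $TS^2$ (as $\pi$ is $(\Tilde J,j)$--holomorphic), so $\deg s_0^*T^{Hor}P=2$ and this line bundle is $\Oo(2)$, whence $H^1(S^2,\Oo(2))=0$. For the vertical block, the clutching function of $s_0^*T^{Vert}P$ is $t\mapsto D\phi_t^H(x_0)\equiv Id$, so the bundle is smoothly trivial; moreover by \eqref{jvert}, since $\exp(Y_{r,t}(x_0))=Id$, the vertical almost complex structure is the constant $J_0$ along $s_0$, so the induced $\bar\p$--operator is the standard one and $s_0^*T^{Vert}P\cong\Oo^{\oplus n}$ (where $\dim M=2n$), whence $H^1=0$.

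Combining, ${\rm coker}\,D_{s_0}\cong H^1(S^2,s_0^*T^{Vert}P)\oplus H^1(S^2,s_0^*T^{Hor}P)=0$, so $D_{s_0}$ is surjective and $s_0$ is a regular $\Tilde J$--holomorphic map. (Equivalently, $s_0^*TP\cong\Oo(2)\oplus\Oo^{\oplus n}$ is a sum of line bundles of degree $\ge -1$, which together with $\nabla\Tilde J|_{s_0}=0$ yields regularity by the automatic transversality criterion of \cite{ms1}; note that the splitting is genuinely needed here, since the degree count alone, $2=\deg(s_0^*TP)$ with rank $n+1$, does not by itself exclude a summand of degree $\le -2$.)

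I expect the main obstacle to be the second step: a careful justification that $D_{s_0}$ preserves the vertical--horizontal splitting along $s_0$. Half of this is already contained in the proof of Lemma \ref{vanishing} (the behaviour of $\Tilde\nabla_v$ on the horizontal frame), but the complementary statement — that differentiating a vertical section in a direction tangent to $s_0$ again produces a vertical section — must be extracted from the compatibility of $g_{\Tilde J}$ and $\Tilde\nabla$ with the fibration, and one must also check that the correction term $-\tfrac12\Tilde J(\nabla\Tilde J)$ relating $\Tilde\nabla$ to the Levi-Civita connection, which vanishes at points of $s_0$ by Lemma \ref{vanishing}, does not contribute to first derivatives along $s_0$ either. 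A secondary, bookkeeping, point is to confirm that the holomorphic structure induced on the (smoothly trivial) bundle $s_0^*T^{Vert}P$ really is the trivial one and not some $\Oo(a)\oplus\Oo(-a)$; this is exactly where the explicit constancy of $\Tilde J^{Vert}$ along $s_0$ coming from \eqref{jvert} is used.
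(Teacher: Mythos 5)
Your proposal is correct and follows essentially the same route as the paper: both use Lemma \ref{vanishing} to kill the zeroth-order (Nijenhuis/$\nabla\Tilde J$) terms so that $D_{s_0}$ becomes the standard Cauchy--Riemann operator, split $s_0^*TP$ as $TS^2\oplus s_0^*T^{Vert}P$ with the vertical summand holomorphically trivial because $D\phi^H_t(x_0)\equiv Id$ and $\Tilde J^{Vert}$ is constant along $s_0$, and conclude surjectivity from the resulting line-bundle degrees. Your phrasing via $H^1$-vanishing and automatic transversality is only a cosmetic variant of the paper's direct argument, and the point you flag as delicate (that $D_{s_0}$ preserves the splitting) is exactly the step the paper asserts from its formula \eqref{differential}.
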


\begin{proof}
For $\Tilde{J}$ to be regular for $s_0$, the differential, 
\begin{eqnarray*}
D_{s_0} : \Omega^0(S^2, s_0^*(TP)) \rightarrow \Omega^{0,1}(S^2, s_0^*(TP))
\end{eqnarray*}

\noindent
which maps smooth sections of $s_0^*(TP)$ to $\Tilde{J}$ antiholomorphic $s_0^*(TP)$ valued 1-forms on $S^2$, must be surjective.  An explicit formula for $D_{s_0}$ evaluated at $\xi \in \Omega^0(S^2, s_0^*(TP))$ is given by:
\begin{eqnarray}\label{vertdiff}
D_{s_0}\xi = \frac{1}{2}\big(\Tilde{\nabla}\xi + \Tilde{J}(s_0)\Tilde{\nabla}\xi \circ j\big) + \frac{1}{4}N_{\Tilde{J}}(\xi, ds_0)
\end{eqnarray}

\noindent
where $\Tilde{\nabla}$ is from \eqref{connection} and  $N_{\Tilde{J}}$ is the Nijenhuis tensor, see \cite{ms2} Remark 3.1.2. 

As $\nabla_{\xi}\Tilde{J} = 0$ for all $\xi \in \Omega^0(S^2, s_0^*(TP))$, \eqref{connection} becomes $\Tilde{\nabla} = \nabla$.  A formula for $N_{\Tilde{J}}(X, Y)$ (which can be found in \cite{ms2} Lemma C.7.1) is given by
\begin{eqnarray*}
N(X, Y) = (J\nabla_YJ - \nabla_{JY}J)X - (J\nabla_XJ-\nabla_{JX}J)Y.
\end{eqnarray*}

\noindent
Thus $\nabla_{\xi}\Tilde{J} = 0$ also implies the Nijenhuis tensor vanishes, so that $D_{s_0}$ reduces to 
\begin{eqnarray}\label{differential}
D_{s_0}\xi = \frac{1}{2}\big(\nabla \xi + \Tilde{J}(s_0)\nabla \xi \circ j\big).
\end{eqnarray}

The complex bundle $s_0^*(TP)$ splits as $TS^2 \oplus \nu_{s_0}$ with $\nu_{s_0} = s_0^*(T^{Vert}P)$.  A trivialization for $\nu_{s_0}$ is given by the path $\{D\phi_t^H\}$, and we are assuming $D\phi_t^H(x_0) \equiv Id$.  Furthermore, $\Tilde{J}$ along this section is the constant product $J_0 \times j$, and so the complex bundle $(\nu_{s_0}, \Tilde{J}^{Vert})$ is trivial, and the connection $\nabla$ on this bundle is also trivial.  Thus we may split $s_0^*(TP)$ as a sum of complex line bundles $\oplus_0^n = L_i$, with $L_0$ corresponding to $TS^2$ and we have $c_1(L_0) = 2$ and $c_1(L_i) = 0$ for $i \ne 0$.

Moreover by \eqref{differential} $D_{s_0}$ preserves this splitting.   This shows that \eqref{differential} gives the formula for the standard Cauchy-Riemann operator.  The vertical portion of $D_{s_0}$ acts on a trivial bundle, and we see that the vertical portion of $D_{s_0}$ is surjective.
\end{proof}

\bigskip
\noindent \emph{Proof of Theorem \ref{totallydegnonmax}}:
\bigskip

Let $\Mm_1([s_0], \Tilde{J})$ be the space of equivalence classes $[u, z]$ of simple holomorphic sections in class $[s_0]$ with one marked point.  Here two holomorphic section maps $(u, z)$ and $(u', z')$ are called equivalent if there is $f \in PSL(2, \C)$ so that 
\begin{eqnarray*}
u' = u \circ f \mbox{ and } f(z') = z.
\end{eqnarray*}

Identify $x_0$ with its image over $0 \in D_+$.  There is only one $\Tilde{J}$ holomorphic curve in class $[s_0]$ passing through $x_0$, and all other sections through $x_0$ have larger energy.  Since all stable $\Tilde{J}$ holomorphic maps through $x_0$ must involve a section, there can be no bubbling.  

\noindent
We have the evaluation map
\begin{eqnarray*}
& ev : \Mm_1([s_0], \Tilde{J}) \times S^2 \rightarrow P \mbox{, by} & \\
& ev([u, z]) = u(z). &
\end{eqnarray*}

Given $\Mm_1(A,J)$, the moduli space of $J$ holomorphic curves $u:S^2 \rightarrow M$ representing $A \in H_2(M)$ and a submanifold $X \subset M$, one may consider the space $ev^{-1}(X)$.  This is referred to as the ``cutdown" moduli space and consists of elements of $\Mm_1(A,J)$ which send the marked point to $X$.  Referring to this space as $\Mm_1^{Cut}(A,J,X)$, in order to use such a cutdown moduli space, three conditions must be satisfied:
\begin{itemize}
\item $\Mm^{Cut}(A, J,X)$ must be compact

\item Every curve in $\Mm^{Cut}(A, J,X)$ must be regular

\item The differential of the evaluation map must be transverse to $X$.  
\end{itemize}

We consider the cutdown moduli space given by $ev^{-1}((x_0,0)) \subset \Mm_1([s_0], \Tilde{J})$ with $0 \in D_+$.  Note that as $\Mm_1([s_0], \Tilde{J})$ had been quotiented out  by $PSL(2, \C)$, $\Mm_1^{Cut}([s_0], \Tilde{J},(x_0,0))$ consists of a single map.

The tangent space to $\Mm([s_0], \Tilde{J})$ can be identified with $ker D_{u} \subset \Omega^0(S^2, u^*(T^{Vert}P))$, and the differential of the evaluation map at the point $(u, w)$ is given by
\begin{eqnarray*}
dev_{u,w} (\xi) = \xi(w).
\end{eqnarray*}

This is surjective at  $\Mm_1^{Cut}([s_0], \Tilde{J},(x_0,0))$ if, given any $v \in T_{s_0(w)}^{Vert}P$, there is $\xi \in \Omega^0(S^2, s_0^*(T^{Vert}P))$ satisfying
\begin{eqnarray*}
\xi(0) = v \mbox{, and } D_{s_0}\xi = 0.
\end{eqnarray*}

But as $s_0^*(T^{Vert}P)$ has been shown to be a trivial holomorphic bundle, we may choose $\xi$ to be a constant section.  One can see from \eqref{differential} that $D_{s_0}(\xi) = 0$ if $\xi$ is constant, and $dev_{s_0,0}$ must then be surjective.  

As $dim(\Mm_1([s_0], \Tilde{J})) = 2n + 2c_1^{Vert}([s_0]) = 2n$, the fact that $\Mm_1([s_0], \Tilde{J})$ contains $2n$ dimensions worth of constant sections allows us to apply Lemma \ref{smallest} to see that $D_{max}$ is open.  This completes the proof of Theorem \ref{totallydegnonmax}

\newpage
\noindent \emph{Proof of Corollary \ref{totallydegenerateintro}}:  
\bigskip

Suppose $x_0$ is a totally degenerate global maximum.  Then as in Theorem \ref{totallydegnonmax}, $x_0$ is an interior point of $F_{max}$.  As $F_{max}$ consists of global maxima, we must have $\partial F_{max} \subset F_{max}$.  Since $M$ is assumed to be connected, $F_{max}$ must equal $M$.  As our loop $\phi_t^H$ was assumed to be nonconstant, this contradiction completes the argument. 

\begin{remark}
One may note that elements of $\partial D_{max}$ will still be totally degenerate fixed points.  While Proposition \ref{regular} continues to hold at totally degenerate fixed points which are not maxima, elements of  $\partial D_{max}$ will not necessarily be local maxima, so that Lemma \ref{smallest} fails to hold.  The difficulty for these boundary sections is that nearby sections need not be constant.    Thus one cannot conclude that elements of $\partial D_{max}$ are interior points.
\end{remark}

\end{subsection}
\end{section}


\bibliography{bibliography} 
\end{document}